\def\disp{\displaystyle}
\def\dref#1{(\ref{#1})}
\theoremstyle{plain}
\newtheorem{theorem}{Theorem}[section]
\newtheorem{lemma}{Lemma}[section]
\newtheorem{corollary}{Corollary}[section]
\theoremstyle{definition}
\newtheorem{remark}{Remark}[section]
\numberwithin{equation}{section}
\begin{document}

\title{\bf A new result for
 boundedness in the  quasilinear parabolic-parabolic Keller-Segel  model (with  logistic source)}

\author{
Jiashan Zheng 
\thanks{Corresponding author.   E-mail address:
 zhengjiashan2008@163.com (J.Zheng)}
\\
    School of Mathematics and Statistics Science,\\
     Ludong University, Yantai 264025,  P.R.China \\
}
\date{}

\maketitle \vspace{0.3cm}
\noindent
\begin{abstract}
The current paper considers the boundedness of solutions  to the following   quasilinear Keller-Segel  model (with  logistic source)
$$
 \left\{\begin{array}{ll}
  u_t=\nabla\cdot(D(u)\nabla u)-\chi\nabla\cdot(u\nabla v)+\mu (u-u^2),\quad
x\in \Omega, t>0,\\
 \disp{v_t-\Delta v=u-v },\quad
x\in \Omega, t>0,\\
 \disp{(D(u)\nabla u-\chi u\cdot \nabla v)\cdot \nu=\frac{\partial v}{\partial\nu}=0},\quad
x\in \partial\Omega, t>0,\\
\disp{u(x,0)=u_0(x)},\quad  v(x,0)=v_0(x),~~
x\in \Omega,\\
 \end{array}\right.\eqno(KS)
$$
 where $\Omega\subset\mathbb{R}^N(N\geq1)$ is a bounded domain with smooth boundary $\partial\Omega,$ $\chi>0$ and $\mu\geq0$.
We prove that for nonnegative and suitably smooth initial data
$(u_0, v_0)$, if $D(u)\geq C_{D}(u+1)^{m-1}$
for all $u\geq 0$ with some $C_{D} > 0$ and some
$m>2-\frac{2}{N}\frac{\chi\max\{1,\lambda_0\}}{[\chi\max\{1,\lambda_0\}-\mu]_{+}}$ or
$m=2-\frac{2}{N}$ and $C_D>\frac{C_{GN}(1+\|u_0\|_{L^1(\Omega)})}{3}(2-\frac{2}{N})^2\max\{1,\lambda_0\}\chi$, the $(KS)$
possesses a global classical solution which is bounded in
$\Omega\times (0,\infty)$, where $C_{GN}$ and $\lambda_0$ are the  constants which are corresponding to the Gagliardo--Nirenberg  inequality (see Lemma \ref{lemma41}) and the maximal Sobolev
regularity (see Lemma \ref{lemma45xy1222232}).
One  novelty of this
paper is that we use the Maximal Sobolev regularity  approach to find
a {\bf new} a-priori estimate
$\int_{\Omega}u^{\frac{\chi\max\{1,\lambda_0\}}{(\chi\max\{1,\lambda_0\}-\mu)_{+}}-\varepsilon}(x,t)dx$ (for all $\varepsilon,t>0$ and $\mu>0$, see Lemma \ref{lemmadderr45630223}), so that
  we develop {\bf new} $L^p$-estimate techniques and thereby obtains the boundedness results.
  To our best knowledge, this seems to be the first rigorous
mathematical result  which   (precisely) gives   the relationship between $m$ and $\frac{\mu}{\chi}$ that yields to the boundedness
   of the solutions.  These results
significantly improve or extend
previous results of several authors.
\end{abstract}

\vspace{0.3cm}
\noindent {\bf\em Key words:}~Boundedness;
Chemotaxis;
 Keller-Segel;
 Parabolic-parabolic
Logistic source;
Nonlinear diffusion

\noindent {\bf\em 2010 Mathematics Subject Classification}:~  92C17, 35K55,
35K59, 35K20

\newpage
\section{Introduction}
The motion of cells moving towards the higher concentration of a chemical signal is called {\bf chemotaxis}.  In 1970, a classical mathematical model for chemotaxis was proposed by \cite{Keller79}, which is called the classical Keller-Segel model.
In fact,
let
$u,v$ and $\chi>0$, respectively,  denote the cell density, the chemo-attractant and the chemotactic sensitivity.
Hillen and Painter (\cite{Hillen79}) introduced the following model
\begin{equation}
 \left\{\begin{array}{ll}
  u_t=\nabla\cdot( D(u)\nabla u)-\chi\nabla\cdot(u\nabla v),\quad
x\in \Omega, t>0,\\
 \disp{ v_t=\Delta v - v+u},\quad
x\in \Omega, t>0,\\
 \end{array}\right.\label{1dfggggg.ssderrfff1}
\end{equation}
which is a  generalization of the classical Keller-Segel chemotaxis model, where the diffusion term $D(u)$
is a (nonlinear)
nonnegative function   which satisfies
\begin{equation}\label{91derfff61}
D\in  C^2([0,\infty))
\end{equation}
and
\begin{equation}\label{9162}
D(u) \geq C_{D}(u+1)^{m-1}~ \mbox{for all}~ u\geq0
\end{equation}
with 
$m \in \mathbb{R}$ and $C_{D}>0$.
During the past four decades, the quasilinear Keller-Segel model \dref{1dfggggg.ssderrfff1}  has attracted more and more attention, and also has been constantly modified by various authors to characterize more biological phenomena. The main issue of the investigation was whether the
solutions of the models \dref{1dfggggg.ssderrfff1} are bounded or blow-up (see e.g.   Burger et al. \cite{Burger2710},  Calvez and Carrillo \cite{Calvez710}, Cie\'{s}lak et. al. \cite{Cie72,Cie791},  Lauren\c{c}ot and Mizoguchi \cite{Laure102},  Winkler et al. \cite{Winklersddd715,Winkler793,Bellomo1216,Horstmann791}, Horstmann \cite{Horstmann2710}). In fact, as we all know that $m =2-\frac{2}{N}$ has been uniquely detected to be the critical blow-up exponent for \dref{1dfggggg.ssderrfff1} in higher space dimensions $N\geq2$. For
instance, if  $m >2- \frac{2}{N}$, then all solutions of \dref{1dfggggg.ssderrfff1} are global and uniformly
bounded \cite{Tao794,Winkler72}, whereas if $m<2-\frac{2}{N}$, \dref{1dfggggg.ssderrfff1} possess some solutions which blow up in finite time (see  Winkler et. al. \cite{Cie791,Winkler79}). From the above analysis we know that the large  exponent
$m$ ($>2- \frac{2}{N}$)  benefits  the boundedness of solutions. We should pointed that the idea of \cite{Tao794} relying on the boundedness of $\int_{\Omega}(u(x,t)+|\nabla v|^{\gamma_0}dx$ (with $\gamma_0<\frac{N}{N-1}$) and  the core step is to establish the estimates of the
functional
\begin{equation}
y(t):=
\int_{\Omega}u^{p}(\cdot,t) +\int_{\Omega} |\nabla {v}(\cdot,t)|^{2{q}}~~\mbox{for any}~~ p > 1~~\mbox{and}~~ q > 1, t\geq0.
\label{gbhnffff1.1hhjjddssggtyy}
\end{equation}
 However, the method seems
 not be used to solve the case $m =2-\frac{2}{N}$ (see the proof of Lemma 3.3 in \cite{Tao794}), and
so, if  $m =2-\frac{2}{N}$, we should find other method to  deal with it.

Apart from the aforementioned system, in order to describe the death and proliferation of cells, a source of logistic type $\mu(u-u^2)$ is included in \dref{1dfggggg.ssderrfff1}. In this paper, we consider the following  the quasilinear  Keller--Segel  system with the
logistic source
\begin{equation}
 \left\{\begin{array}{ll}
  u_t=\nabla\cdot( D(u)\nabla u)-\chi\nabla\cdot(u\nabla v)+\mu(u-u^2),\quad
x\in \Omega, t>0,\\
 \disp{ v_t=\Delta v - v+u},\quad
x\in \Omega, t>0,\\
 \disp{(D(u)\nabla u-\chi u\cdot \nabla v)\cdot \nu=\frac{\partial v}{\partial\nu}=0},\quad
x\in \partial\Omega, t>0,\\
\disp{u(x,0)=u_0(x)},\quad  v(x,0)=v_0(x),~~
x\in \Omega,\\
 \end{array}\right.\label{1.ssderrfff1}
\end{equation}
where 
$\Omega\subset\mathbb{R}^N(N\geq1)$ is a bounded  domain 
 with smooth boundary
$\partial\Omega$. In the last decade, much attention has been devoted to studying the type of model
\dref{1.ssderrfff1} and its variations. (see  Xiang \cite{Xiangssdd55672gg}, Tello and  Winkler \cite{Tello710}, Zheng et al.  \cite{Zheng333334556,Zheng334556},
Zheng et al.
\cite{Zheng0,Zheng33312186,Zhengsssddsseedssddxxss,Zhengssss6677788888ssdefr23}). And
global existence, boundedness, asymptotic behavior and blow-up of solution were studied in \cite{Lianu1,Marciniak,Walker,Wangscd331629,Winkler79312,Zhengsdsd6,Zhengsssddssddsseedssddxxss,Winkler715,Winklersdddhjjj715,Zhengsssddswwerrrseedssddxxss,Zheng333334556,Zheng334556,Zhengddfggghjjkk1,Winklersdddhjjj715}.
In fact, if $D(u)\equiv1,$ it is known that arbitrarily small $\mu > 0$ guarantee the global existence and
boundedness of solutions for \dref{1.ssderrfff1} when $N = 2$ (\cite{Osaki710}), and that {\bf appropriately large} $\mu$
precludes  blow-up in the
case $N\geq 3$ (\cite{Winkler37103}). Thus, the {\bf large}
$\mu$  also  benefits  the boundedness of solutions.
 The question how
far such systems \dref{1.ssderrfff1} at all are globally solvable when $N \geq 3$ and $\mu > 0$ is {\bf small} remains completely
open.   Connected to the above analysis, it is a
natural question to ask:

Can we provide a explicit condition involving the exponent $m$ of  nonlinear diffusion , the coefficient $\chi$ of
chemosensitivity and coefficient $\mu$ of  the logistic source to ensure global bounded solutions in
the system \dref{1.ssderrfff1}?

 This article presents a relationship between the  constant $\chi$ of the chemosensitivity  as well as the coefficient $\mu$ of logistic source and the diffusion exponent $m$ which implies  the boundedness of
\dref{1.ssderrfff1}.
 To the best of our knowledge, this is the first result which gives  the clear and definite relationship between $m$ and $\frac{\mu}{\chi}$ that yields to the boundedness
   of the solution.
   Our main result is the following:
\begin{theorem}\label{theorem3}Assume
that $u_0\in C^0(\bar{\Omega})$ and $v_0\in W^{1,\infty}(\bar{\Omega})$ both are nonnegative,
$D$ satisfies \dref{91derfff61}--\dref{9162}.
If one of the following cases holds:

(i)~~$m>2-\frac{2}{N}\frac{\chi\max\{1,\lambda_0\}}{[\chi\max\{1,\lambda_0\}-\mu]_{+}}$;

(ii)~~$m=2-\frac{2}{N}$ and $C_D>\frac{C_{GN}(1+\|u_0\|_{L^1(\Omega)})}{3}(2-\frac{2}{N})^2\max\{1,\lambda_0\}\chi$;
%
%
%
%
%
%

then there exists a pair $(u,v)\in (C^0(\bar{\Omega}\times[0,\infty))\cap C^{2,1}(\Omega\times(0,\infty))^2$ which solves \dref{1.ssderrfff1} in the classical sense, where $C_{GN}$ and $\lambda_0$ are the  constants which are corresponding to the Gagliardo--Nirenberg  inequality (see Lemma \ref{lemma41}) and the maximal Sobolev
regularity (see Lemma \ref{lemma45xy1222232}).
Moreover, both $u$ and $v$ are bounded in $\Omega\times(0,\infty)$.
\end{theorem}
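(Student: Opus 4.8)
The plan is to obtain the global bounded solution by coupling a local existence/extensibility framework with a hierarchy of a priori estimates that terminates in a uniform $L^\infty$ bound on $u$. First I would invoke the standard fixed-point and parabolic regularity theory to produce a maximal classical solution $(u,v)$ on a time interval $[0,T_{max})$, together with the extensibility criterion asserting that if $T_{max}<\infty$ then $\limsup_{t\nearrow T_{max}}\|u(\cdot,t)\|_{L^\infty(\Omega)}=\infty$; the entire remaining argument is then devoted to ruling out this blow-up by a time-independent $L^\infty$ bound. The first ingredient is the mass control: integrating the first equation over $\Omega$, using the Neumann conditions to kill the divergence terms, and invoking the logistic term $\mu(u-u^2)$ together with the Cauchy--Schwarz inequality $\int_\Omega u^2\ge\frac{1}{|\Omega|}(\int_\Omega u)^2$, one obtains the logistic ODE estimate $\int_\Omega u(\cdot,t)\le\max\{\|u_0\|_{L^1(\Omega)},|\Omega|\}$ for all $t\in(0,T_{max})$, which also feeds into elementary bounds for $v$.

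The heart of the argument is the key a priori estimate of Lemma \ref{lemmadderr45630223}, which for $\mu>0$ provides a time-independent bound on $\int_\Omega u^{p_0-\varepsilon}$ with $p_0:=\frac{\chi\max\{1,\lambda_0\}}{(\chi\max\{1,\lambda_0\}-\mu)_+}$ and any small $\varepsilon>0$. I would derive this by testing the $u$-equation against $u^{r-1}$ with $r<p_0$, which after integration by parts turns the chemotactic term into $-\frac{\chi(r-1)}{r}\int_\Omega u^r\Delta v$ and produces the logistic dissipation $-\mu\int_\Omega u^{r+1}$ together with the lower-order contribution $+\mu\int_\Omega u^r$. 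The delicate point is to control the $\Delta v$ term: applying the maximal Sobolev regularity estimate of Lemma \ref{lemma45xy1222232} to the second equation, the relevant space-time norm of $\Delta v$ is dominated by $\max\{1,\lambda_0\}$ times that of $u$, so the chemotactic term is ultimately bounded by $\frac{\chi(r-1)}{r}\max\{1,\lambda_0\}\int_\Omega u^{r+1}$ up to lower-order terms. Since a short computation shows $\frac{\chi(r-1)}{r}\max\{1,\lambda_0\}<\mu$ exactly when $r<p_0$ (with equality at $r=p_0$), the logistic dissipation absorbs this contribution and, after integrating in time, yields the claimed bound. This is the step I expect to be the main obstacle, since it is precisely where the sharp interplay between $\mu$, $\chi$ and the regularity constant $\lambda_0$ is encoded and where the bookkeeping of constants must be carried out with care.

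With the $L^{p_0-\varepsilon}$ bound in hand, I would run the standard $L^p$-iteration: testing against $u^{p-1}$ produces the dissipation $\sim C_D\int_\Omega(u+1)^{m-1}u^{p-2}|\nabla u|^2\sim\int_\Omega|\nabla u^{(p+m-1)/2}|^2$, while the chemotactic term is estimated by Young's inequality and the Gagliardo--Nirenberg inequality of Lemma \ref{lemma41}, interpolating $u^{(p+m-1)/2}$ between its gradient norm and the lower-order norm supplied by Step~2. Condition~(i), namely $m>2-\frac{2}{N}p_0$, is exactly the subcriticality requirement making this interpolation strictly subordinate to the gradient term, so that a Gronwall argument yields a uniform bound on $\int_\Omega u^p$ for every $p>1$; in the regime $\mu\ge\chi\max\{1,\lambda_0\}$ one has $p_0=\infty$, condition~(i) is vacuous, and the logistic dissipation alone suffices to close each $L^p$ estimate.

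In the borderline case~(ii), where $m=2-\frac{2}{N}$, the Gagliardo--Nirenberg interpolation---now anchored on the conserved $L^1$ mass rather than on Step~2---is exactly critical, so the gradient and chemotactic terms carry the same scaling; boundedness then survives only because the hypothesis $C_D>\frac{C_{GN}(1+\|u_0\|_{L^1(\Omega)})}{3}(2-\frac{2}{N})^2\max\{1,\lambda_0\}\chi$ renders the coefficient of the gradient term in the resulting differential inequality strictly positive, after which the usual iteration proceeds to all $L^p$. Finally, once $\|u(\cdot,t)\|_{L^p(\Omega)}$ is bounded uniformly for some $p>N$, I would apply standard smoothing estimates for the Neumann heat semigroup to the $v$-equation to bound $\|\nabla v(\cdot,t)\|_{L^\infty(\Omega)}$, and then a Moser-type iteration (or a variation-of-constants argument) to the $u$-equation to deduce a uniform $L^\infty$ bound on $u$. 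This contradicts the extensibility criterion, forcing $T_{max}=\infty$ and establishing that both $u$ and $v$ are bounded in $\Omega\times(0,\infty)$, which completes the proof.
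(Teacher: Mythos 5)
Your proposal is correct and follows essentially the same route as the paper's own proof: local existence with the blow-up criterion (Lemma \ref{lemma70}), the mass bound, the exponentially weighted variation-of-constants combined with maximal Sobolev regularity to get the $L^{p_0-\varepsilon}$ estimate with the sharp absorption constant $\frac{(r-1)}{r}\max\{1,\lambda_0\}\chi<\mu$ (the paper's Lemma \ref{lemmadderr45630223}, where the constant is extracted via the optimization Lemmas \ref{lemma45630223116}--\ref{lemma222245630223116}), the Gagliardo--Nirenberg bootstrap to all $L^p$ under the subcriticality condition (i), the critical case (ii) closed by the largeness assumption on $C_D$ anchored on the $L^1$ bound, and finally the Neumann heat semigroup estimate for $\nabla v$ plus Moser--Alikakos iteration to reach $L^\infty$. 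The only differences are presentational (you absorb the chemotactic term directly where the paper isolates the Young-parameter optimization into separate lemmas), not substantive.
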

By Theorem \ref{theorem3}, we derive the following Corollary:
\begin{corollary}\label{corollarye455}
Assume
that $u_0\in C^0(\bar{\Omega})$ and $v_0\in W^{1,\infty}(\bar{\Omega})$ both are nonnegative,
$D$ satisfies \dref{91derfff61}--\dref{9162}.
If $\mu>0$ and
$\mu>2-\frac{2}{N}\frac{\chi\max\{1,\lambda_0\}}{(\chi\max\{1,\lambda_0\}-\mu)_+}$,  then   \dref{1.ssderrfff1}
 possesses a  global classical solution $(u, v)$ which
is bounded in $\Omega\times(0,\infty)$.
\end{corollary}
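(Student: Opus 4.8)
The plan is to obtain the result as the $\mu$-driven counterpart of Theorem~\ref{theorem3}: its hypothesis is exactly condition~(i) with the diffusion exponent $m$ replaced by the logistic strength $\mu$, reflecting that the quadratic dissipation $-\mu u^2$ can take over the role that strong diffusion plays in the boundedness argument. Abbreviate $a:=\chi\max\{1,\lambda_0\}$ and $r^\ast:=\frac{a}{(a-\mu)_+}$, and recall that Lemma~\ref{lemmadderr45630223} supplies, for every $\mu>0$, a uniform-in-time bound on $\int_\Omega u^{s}\,dx$ for each $s<r^\ast$, an estimate that is completely independent of $m$. The stated inequality $\mu>2-\frac{2}{N}r^\ast$ is equivalent to $r^\ast>\frac{N(2-\mu)}{2}$, i.e. to the assertion that this a~priori integrability already exceeds the level required to start the boundedness bootstrap; the task is therefore to show that boundedness follows with no further demand on the diffusion.

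I would first dispose of the case $\mu\ge a$. Here $(a-\mu)_+=0$, so $r^\ast=+\infty$ and Lemma~\ref{lemmadderr45630223} yields a uniform bound on $\int_\Omega u^{s}\,dx$ for \emph{every} finite $s$; equivalently, the right-hand side of condition~(i) in Theorem~\ref{theorem3} collapses to $-\infty$, so (i) holds for the prescribed $m$ whatever its value and Theorem~\ref{theorem3} applies verbatim. Boundedness is then immediate.

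The real work is the case $0<\mu<a$, in which $r^\ast=\frac{a}{a-\mu}\in(1,\infty)$ is finite and condition~(i) \emph{cannot} be invoked, since the corollary leaves $m$ unconstrained. Here I would rerun the $L^p$-testing scheme underlying Theorem~\ref{theorem3}: pairing the first equation with $u^{p-1}$ produces the (possibly feeble) diffusion term, the chemotactic cross-term, and the logistic contribution $\mu\int_\Omega u^p-\mu\int_\Omega u^{p+1}$. The idea is to let the dissipation $-\mu\int_\Omega u^{p+1}$, rather than the diffusion, carry the cross-term: one rewrites it via the $v$-equation as a multiple of $\int_\Omega u^{p+1}$ plus lower-order terms, controls the genuinely nonlocal part through the maximal Sobolev regularity of Lemma~\ref{lemma45xy1222232} (this is where the constant $a=\chi\max\{1,\lambda_0\}$ enters) against the a~priori bound $u\in L^{s}$, and closes the estimate with the Gagliardo--Nirenberg inequality of Lemma~\ref{lemma41}. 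The margin $r^\ast>\frac{N(2-\mu)}{2}$ is precisely what makes the absorption succeed, upgrading the integrability to a uniform bound on $\int_\Omega u^p$ for every $p$ and then to an $L^\infty$ bound by the usual Moser iteration together with parabolic regularity for $v$.

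I expect this last case to be the genuine obstacle. Because the corollary must survive even a weak (small, or negative, $m$) diffusion, the diffusion term can no longer be trusted to close anything, and the whole argument must rest on the logistic dissipation as quantified solely through $r^\ast$. The delicate points will be propagating the finite integrability $u\in L^{s}$ to a serviceable bound on $\nabla v$ through maximal regularity, and keeping the constants under control as $s\uparrow r^\ast$, so as to confirm that the threshold $r^\ast>\frac{N(2-\mu)}{2}$ is truly sufficient and not merely the balance at which the new a~priori estimate of Lemma~\ref{lemmadderr45630223} was obtained.
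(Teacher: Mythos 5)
The decisive problem is your reading of the hypothesis. The inequality ``$\mu>2-\frac{2}{N}\frac{\chi\max\{1,\lambda_0\}}{(\chi\max\{1,\lambda_0\}-\mu)_{+}}$'' in the corollary is a typographical slip for ``$m>2-\frac{2}{N}\frac{\chi\max\{1,\lambda_0\}}{(\chi\max\{1,\lambda_0\}-\mu)_{+}}$'': Remark (ii) treats this right-hand side as a bound on the diffusion exponent by comparing it with $2-\frac{2}{N}$, and Remark (ix) explicitly restates the corollary as ``$\mu>0$ and $m>2-\frac{2}{N}\frac{\chi\max\{1,\lambda_0\}}{(\chi\max\{1,\lambda_0\}-\mu)_{+}}$''. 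Under the intended reading the paper's proof is the one-liner you set aside: the hypothesis is verbatim case (i) of Theorem \ref{theorem3}, which applies as stated. Your first case, $\mu\ge\chi\max\{1,\lambda_0\}$, where $r^{\ast}=+\infty$ and (i) holds for every $m\in\mathbb{R}$, is correct and coincides with Remark (iv); but it is the only part of your plan that survives.

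Your main case, $0<\mu<\chi\max\{1,\lambda_0\}$ with $m$ unconstrained, is a strictly stronger claim than anything the paper proves, and the sketch you give for it would fail. After maximal Sobolev regularity (Lemma \ref{lemma45xy1222232}), the chemotaxis contribution to the $L^{p}$ estimate can be absorbed by the logistic dissipation only as long as $\frac{p-1}{p}\chi\max\{1,\lambda_0\}<\mu$, i.e. only for $p<r^{\ast}=\frac{\chi\max\{1,\lambda_0\}}{(\chi\max\{1,\lambda_0\}-\mu)_{+}}$; this is exactly the mechanism, and the ceiling, of Lemma \ref{lemmadderr45630223}, whose proof discards the diffusion term entirely, so ``letting the dissipation carry the cross-term'' can never reach $p\ge r^{\ast}$. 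To go beyond $r^{\ast}$ the paper must reinstate the diffusion term, and the Gagliardo--Nirenberg/Young absorption in Lemma \ref{lemmasdffssdd45566645630223} closes precisely when $\frac{N(p+1)-Nl_0}{(2-N)l_0+N(m+p-1)}<1$, i.e. when $p+1<m+p-1+\frac{2}{N}l_0$ for some admissible $l_0<r^{\ast}$ --- which is exactly the lower bound $m>2-\frac{2}{N}r^{\ast}$ you hoped to avoid. Your pivot inequality $r^{\ast}>\frac{N(2-\mu)}{2}$ never enters any of these estimates: nothing in the scheme converts the a priori bounds $u\in L^{s}$, $s<r^{\ast}$, into control of $\int_{\Omega}u^{p+1}$ for $p\ge r^{\ast}$ without a lower bound on $m$, so for sufficiently negative $m$ your bootstrap has no absorbing term and stalls at the first exponent past $r^{\ast}$.
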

\begin{remark}
(i) If $\mu>\frac{(N-2)_{+}}{N}\chi\max\{1,\lambda_0\}$, then  $2-\frac{2}{N}\frac{\chi\max\{1,\lambda_0\}}{[\chi\max\{1,\lambda_0\}-\mu]_{+}}<1$, then, Theorem \ref{theorem3}
is  improves  the result of Zheng et. al.  (\cite{Zhengssdddssddddkkllssssssssdefr23}).

(ii) Obviously, for any $\mu>0,$ then $2-\frac{2}{N}\frac{\chi\max\{1,\lambda_0\}}{[\chi\max\{1,\lambda_0\}-\mu]_{+}}<2-\frac{2}{N}$, therefore, Corollary \ref{corollarye455} partly improves the results of \cite{Wang79}, \cite{Zheng33312186} and \cite{Zhangffgd55672gg}, respectively.


%
%

(iii) If $\mu>\frac{(N-2)_{+}}{N}\chi\max\{1,\lambda_0\}$ and $D(u)\equiv1$, hence,  Corollary \ref{corollarye455}
extends the results of   Winkler (\cite{Winkler37103}), who proved the possibility of boundness, in the cases
 $\mu>0$ is sufficiently large, and with $\Omega\subset \mathbb{R}^N$ is a
convex bounded domains.


(iv) If $\mu>\chi\max\{1,\lambda_0\}$, then for any $m\in \mathbb{R}$, then problem \dref{1.ssderrfff1} admits  a  global classical solution $(u, v)$ which
is bounded in $\Omega\times(0,\infty)$.

 (v) As far as we know that this  is the first result which gives certainly  relationship between $m$ and $\frac{\mu}{\chi}$ that yields to boundedness of the solution.

 (vi) Theorem \ref{theorem3} asserts that, as in the corresponding two-dimensional Keller-Segel system
(see Osaki et al. \cite{Osaki710}), even arbitrarily small quadratic degradation of cells (for any $\mu>0$) is sufficient to rule out blow-up and rather
ensure boundedness of solutions.

 (vii) The idea of the paper can also be solved other type of the models, e.g., chemotaxis-haptotaxis model (with nonlinear  chemosensitivity) (see \cite{Zhengssdddwwwwssddghjjkk1}),  parabolic-elliptic   Keller-Segel  model (with   logistic source) (see \cite{222Zhengssdddwwwwssddghjjkk1}),  Keller-Segel--Stokes system
 with nonlinear diffusion (and logistic source)  (see \cite{Zhengssdddwssdddwwwssddghjjkk1}).

(viii) It concludes from Theorem \ref{theorem3} that large  exponent
$m+\frac{2}{N}\frac{\chi\max\{1,\lambda_0\}}{[\chi\max\{1,\lambda_0\}-\mu]_{+}}$ benefits  the boundedness of solutions.

(ix) 
From Corollary \ref{corollarye455}, we know that if $\mu>0$ and
$m>2-\frac{2}{N}\frac{\chi\max\{1,\lambda_0\}}{(\chi\max\{1,\lambda_0\}-\mu)_+}$,  which implies that $m>2-\frac{2}{N}$, therefore, our results improve the result of \cite{Li445666} and \cite{Wangscd331629} provided that  the haptotaxis is ignored ($w\equiv0$ in \cite{Li445666} and \cite{Wangscd331629}).
%
%
%
%

\end{remark}
In view of  Theorem \ref{theorem3}, we also conclude  the following Corollary:
\begin{corollary}\label{sddcorollasdddry841}
Assume
that $u_0\in C^0(\bar{\Omega})$ and $v_0\in W^{1,\infty}(\bar{\Omega})$ both are nonnegative,
$D$ satisfies \dref{91derfff61}--\dref{9162}.
Assume that $\mu=0$.  Then if ~$m>2-\frac{2}{N}$ or $m=2-\frac{2}{N}$ and $C_D>\frac{C_{GN}(1+\|u_0\|_{L^1(\Omega)})}{3}(2-\frac{2}{N})^2\max\{1,\lambda_0\}\chi$,   \dref{1.ssderrfff1}  admits  a  global classical solution $(u, v)$ which
is bounded in $\Omega\times(0,\infty)$.
\end{corollary}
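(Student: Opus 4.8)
The plan is to read off Corollary~\ref{sddcorollasdddry841} as the endpoint $\mu=0$ of Theorem~\ref{theorem3}. The only genuine verification is that the two hypotheses of the corollary are exactly what cases (i) and (ii) of the theorem become once $\mu=0$ is inserted. First I would evaluate the denominator: since $\chi>0$ and $\max\{1,\lambda_0\}\geq1$, setting $\mu=0$ gives $[\chi\max\{1,\lambda_0\}-\mu]_+=\chi\max\{1,\lambda_0\}>0$, so the critical ratio $\frac{\chi\max\{1,\lambda_0\}}{[\chi\max\{1,\lambda_0\}-\mu]_+}$ collapses to $1$. Hence case (i) of Theorem~\ref{theorem3} reduces precisely to $m>2-\frac{2}{N}$, the first alternative in the corollary. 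Case (ii) of the theorem contains no $\mu$ whatsoever, so it is reproduced verbatim as the second alternative $m=2-\frac{2}{N}$ together with the lower bound $C_D>\frac{C_{GN}(1+\|u_0\|_{L^1(\Omega)})}{3}(2-\frac{2}{N})^2\max\{1,\lambda_0\}\chi$.

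With the hypotheses matched, I would invoke Theorem~\ref{theorem3} directly with $\mu=0$ to produce the global classical solution $(u,v)$ that is bounded on $\Omega\times(0,\infty)$. In particular, the local existence of a classical solution and the extensibility criterion underlying Theorem~\ref{theorem3} are insensitive to the value of $\mu\geq0$, so nothing new is needed on that front.

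The one place where a genuine obstacle could lurk---and the step I would check most carefully---is whether the proof of Theorem~\ref{theorem3} survives at $\mu=0$, since the new a-priori estimate on $\int_\Omega u^{\frac{\chi\max\{1,\lambda_0\}}{(\chi\max\{1,\lambda_0\}-\mu)_+}-\varepsilon}$ advertised in the introduction is asserted only for $\mu>0$. Here the resolution is clean: when $\mu=0$ the logistic term vanishes, and integrating the first equation of \dref{1.ssderrfff1} against the no-flux boundary condition yields mass conservation, $\int_\Omega u(\cdot,t)=\int_\Omega u_0=\|u_0\|_{L^1(\Omega)}$ for all $t>0$. This $L^1$-bound supplies exactly the low-order control that the $\mu>0$ a-priori estimate was needed for, so the subsequent $L^p$-estimate scheme of Theorem~\ref{theorem3}---with $m>2-\frac{2}{N}$, or at the critical exponent $m=2-\frac{2}{N}$ under the largeness condition on $C_D$---runs unchanged and delivers the boundedness of $u$, and then of $v$ via the maximal Sobolev regularity of the second equation. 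Thus the corollary is essentially bookkeeping: the substitution $\mu=0$, the collapse of the critical ratio to $1$, and the observation that mass conservation replaces the logistic a-priori bound.
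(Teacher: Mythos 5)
Your proposal is correct and follows the paper's own route: the paper gives no separate proof of Corollary~\ref{sddcorollasdddry841} but derives it directly from Theorem~\ref{theorem3} at $\mu=0$, where the ratio $\frac{\chi\max\{1,\lambda_0\}}{[\chi\max\{1,\lambda_0\}-\mu]_+}$ collapses to $1$, and where the $\mu=0$ machinery (Lemmas~\ref{333ssdeedrfe116lemma70hhjj}, \ref{lemma45566645630223} and \ref{lemmaddffddfffrsedrffffffgg}) indeed replaces the logistic a-priori estimate by mass conservation, exactly as you observe. Your only slightly glib phrase is that the $L^p$-scheme ``runs unchanged'' at the critical exponent $m=2-\frac{2}{N}$: there the Gagliardo--Nirenberg exponent equals $1$, Young absorption fails, and the paper instead performs the quantitative constant comparison of Lemmas~\ref{lemma222245630223116} and \ref{lemma4ddffffghhhggg5630223116} (the condition $h(p_0)>0$), which is precisely where the largeness assumption on $C_D$ that you cite is consumed.
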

\begin{remark}
(i) When $m>2-\frac{2}{N}$,
Corollary \ref{sddcorollasdddry841} is (partly) coincides with
Theorem 0.1 of \cite{Tao794}, however,  we should pointed that the method in  \cite{Tao794} seems to not be used to solve the case $m=2-\frac{2}{N}$.

(ii) To the best of knowledge,  this is the first result which solve the case $m=2-\frac{2}{N}$ that yields to the boundedness of solution to problem \dref{1.ssderrfff1}.
\end{remark}

We sketch here the main ideas and methods used in this article.
One  novelty of this
paper is that we use the
Maximal Sobolev regularity  approach to prove the existence of bounded solutions. Moreover, by careful analysis, firstly,  one can derive {\bf new} a-priori estimate
$\int_{\Omega}u^{\gamma_0}(x,t)dx$ (for all $1<\gamma_0<\frac{\chi\max\{1,\lambda_0\}}{(\chi\max\{1,\lambda_0\}-\mu)_{+}},t>0$ and $\mu>0$, see Lemma \ref{lemmadderr45630223}), then
  we develop {\bf new} $L^p$-estimate techniques to raise the a priori estimate of solutions from
$L^{\gamma_0}(\Omega) \rightarrow L^{p}(\Omega) (\mbox{for all}~p>1)$ (see Lemma \ref{lemmasdffssdd45566645630223}).
  While if  $\mu=0$ and $m=2-\frac{2}{N}$, with the help of the Maximal Sobolev regularity  approach, we firstly get the bounded of $\int_{\Omega}u^{1+\epsilon}(x,t)dx$ (Lemma \ref{lemma45566645630223}), so that, in light of the Maximal Sobolev regularity  approach again, we can obtain the boundedness of
  $\int_{\Omega}u^{p}(x,t)dx$ (for all $p>1$ and $t>0$, see Lemma \ref{lemmaddffddfffrsedrffffffgg}). Finally,  in view of
   the standard
semigroup arguments and the Moser iteration method
  (see e.g.  Lemma A.1 of \cite{Tao794}), we can establish the $L^\infty$ bound of $u$ (see the proof of  Theorem \ref{theorem3}).

\section{Preliminaries}
%
In order to prove the main results, we first state several elementary
lemmas which will be needed later. We also
present some known results on quasilinear Keller-Segel  model (with  logistic source).
\begin{lemma}(\cite{Ishida,Zhenssdssdddfffgghjjkk1,Zhddengssdeeezseeddd0})\label{lemma41ffgg}
Let  $s\geq1$ and $q\geq1$.
Assume that $p >0$ and $a\in(0,1)$ satisfy
$$\frac{1}{2}-\frac{p}{N}=(1-a)\frac{q}{s}+a(\frac{1}{2}-\frac{1}{N})~~\mbox{and}~~p\leq a.$$
Then there exist $c_0, c'_0 >0$ such that for all $u\in W^{1,2}(\Omega)\cap L^{\frac{s}{q}}(\Omega)$,
$$\|u\|_{W^{p,2}(\Omega)} \leq c_{0}\|\nabla u\|_{L^2(\Omega)}^{a}\|u\|^{1-a}_{L^{\frac{s}{q}}(\Omega)}+c'_0\|u\|_{L^{\frac{s}{q}}(\Omega)}.$$
\end{lemma}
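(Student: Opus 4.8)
\emph{Plan.} The inequality is a fractional Gagliardo--Nirenberg--Sobolev estimate: reading the three factors through their Sobolev scaling exponents, the left-hand norm $\|u\|_{W^{p,2}}$ corresponds to $\tfrac12-\tfrac pN$, the gradient factor $\|\nabla u\|_{L^2}$ to $\tfrac12-\tfrac1N$, and $\|u\|_{L^{s/q}}$ to $\tfrac qs$, so the first hypothesis is exactly the homogeneity (interpolation) identity for such an inequality, while $p\le a$ is the admissibility condition on the interpolation exponent $a\in(0,1)$. The first thing I would do is reduce from $\Omega$ to $\mathbb R^N$: since $\partial\Omega$ is smooth there is a bounded linear extension operator $E$ with $E:W^{1,2}(\Omega)\to W^{1,2}(\mathbb R^N)$ and $E:L^{s/q}(\Omega)\to L^{s/q}(\mathbb R^N)$; writing $w=Eu$, restriction back to $\Omega$ only decreases the left-hand norm, so it suffices to prove the estimate for $w$ on $\mathbb R^N$.

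On $\mathbb R^N$ I would split the task using $\|w\|_{W^{p,2}}\le C\big(\|w\|_{L^2}+\|w\|_{\dot H^p}\big)$ (equivalence of the Bessel-potential norm with $L^2$ plus the homogeneous seminorm $\|w\|_{\dot H^p}=\big\||\xi|^p\hat w\big\|_{L^2}$). The core is the homogeneous estimate $\|w\|_{\dot H^p}\le C\|\nabla w\|_{L^2}^{a}\|w\|_{L^{s/q}}^{1-a}$, which I would obtain by a Littlewood--Paley decomposition $w=\sum_j\Delta_j w$: Bernstein's inequalities bound each frequency-localized piece $\|\,|\nabla|^p\Delta_j w\|_{L^2}$ both by the low norm $\|w\|_{L^{s/q}}$ (effective for low frequencies) and by $\|\nabla w\|_{L^2}$ (effective for high frequencies), and the scaling identity in the hypothesis is exactly what makes the two resulting geometric series cross and sum, producing the product $\|\nabla w\|_{L^2}^{a}\|w\|_{L^{s/q}}^{1-a}$ with the stated $a$. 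Equivalently one may invoke the classical Gagliardo--Nirenberg inequality directly, with differentiation order $j=p$, top order $m=1$ measured in $L^2$, and low exponent $s/q$.

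It then remains to handle the $L^2$ factor: applying the ordinary Gagliardo--Nirenberg inequality on the bounded domain gives $\|u\|_{L^2}\le C\|\nabla u\|_{L^2}^{\tilde a}\|u\|_{L^{s/q}}^{1-\tilde a}+C\|u\|_{L^{s/q}}$, and it is precisely the additive lower-order correction intrinsic to interpolation on a bounded $\Omega$ (and to the reabsorption of the $\|u\|_{L^2(\Omega)}$ contribution generated by $E$) that accounts for the second summand $c'_0\|u\|_{L^{s/q}(\Omega)}$. Assembling these estimates and transferring back through $E$ yields the claim, with $c_0,c'_0$ depending only on $N,p,s,q,\Omega$ and the extension and Gagliardo--Nirenberg constants.

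\emph{Main obstacle.} The delicate point is the frequency-crossing bookkeeping in the homogeneous step, where one must verify that the constraint $p\le a$ (together with $a\in(0,1)$ fixed by the scaling identity) is exactly what guarantees convergence of the summed Littlewood--Paley series and admissibility of the Gagliardo--Nirenberg exponents; everything else -- the extension, the splitting of the full $W^{p,2}$ norm, and the reabsorption of lower-order terms -- is routine. Since the cited references already establish this inequality in the required generality, in the paper one could alternatively simply invoke them.
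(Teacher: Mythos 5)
The paper offers no proof of Lemma \ref{lemma41ffgg} at all --- it is quoted with a citation to \cite{Ishida,Zhenssdssdddfffgghjjkk1,Zhddengssdeeezseeddd0} (and is in fact not even invoked later; the boundedness proofs use Lemma \ref{lemma41}) --- so your sketch can only be measured against the cited literature, and there it has one genuine gap: the quasi-norm range. The hypotheses require only $s\geq 1$ and $q\geq 1$, so the low exponent $s/q$ may lie in $(0,1)$ (take $s<q$), and this is precisely the range for which chemotaxis papers state the lemma with the awkward $L^{s/q}$ notation. In that range three of your steps fail as written. The generic extension step is the least serious: Stein's operator is not known to be bounded on the quasi-normed $L^{s/q}$, but a first-order reflection extension (finite sums of compositions with bi-Lipschitz charts times smooth cutoffs) is bounded on $L^{r}$ for every $r>0$ and on $W^{1,2}$, which is all you need. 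The serious failure is the Littlewood--Paley low-frequency bound: it implicitly uses $\|\Delta_j w\|_{L^{s/q}}\leq C\|w\|_{L^{s/q}}$, i.e.\ boundedness of a convolution operator on $L^{r}$ with $r<1$, and this is false --- testing against a narrow bump $w_\lambda=w(\cdot/\lambda)$ with $\int w\neq 0$ gives $\|\Delta_j w_\lambda\|_{L^r}\approx \lambda^{N}\|\check\psi_j\|_{L^r}$ while $\|w_\lambda\|_{L^r}=\lambda^{N/r}\|w\|_{L^r}$, so the ratio blows up like $\lambda^{N(1-1/r)}$ as $\lambda\to 0$; the same scaling shows one cannot bound $\|\Delta_j w\|_{L^2}$ by any multiple of $\|w\|_{L^{s/q}}$ alone. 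Likewise ``the ordinary Gagliardo--Nirenberg inequality'' with a low exponent below $1$ is not the classical statement and needs its own argument.

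The standard repair --- and essentially what the cited sources do --- is to run your argument only with a Banach low norm, say $L^{1}$ (where your outline is correct: the extension, the Bernstein/Nikolskii bounds, and the frequency-crossing all work, and the hypotheses do guarantee convergence, since $p\leq a<1$ kills the high-frequency tail while the scaling identity together with $p\leq a<1$ rules out $q/s\leq \frac12-\frac1N$, making the low-frequency exponent $p+N(\frac{q}{s}-\frac12)$ positive), and then to descend to $s/q<1$ by elementary interpolation and absorption: $\|u\|_{L^{1}}\leq\|u\|_{L^{s/q}}^{\theta}\|u\|_{L^{\sigma}}^{1-\theta}$ for suitable $\sigma>1$ controlled through $W^{p,2}\hookrightarrow L^{\sigma}$ (or through $\|\nabla u\|_{L^2}$ and $\|u\|_{L^1}$ again), after which the resulting power of the left-hand side, being strictly less than one, is absorbed by Young's inequality; this absorption is exactly what generates the additive term $c_0'\|u\|_{L^{s/q}}$ and reproduces the stated exponent $a$. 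With that supplement your architecture is sound; as written it proves the lemma only when $s\geq q$. Your closing remark that one may simply invoke the references is, in fact, all the paper itself does.
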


\begin{lemma}(\cite{Zheng0})\label{lemma41}
Let $\theta\in(0,p)$.
There exists a positive constant $C_{GN}$ such that for all $u \in W^{1,2}(\Omega)\cap L^\theta(\Omega)$,
$$\|u\|_{L^p(\Omega)} \leq C_{GN}(\|\nabla u\|_{L^2(\Omega)}^{a}\|u\|^{1-a}_{L^\theta(\Omega)}+\|u\|_{L^\theta(\Omega)})$$
is valid with 
$a =\disp{\frac{\frac{N}{\theta}-\frac{N}{p}}{1-\frac{N}{2}+\frac{N}{\theta}}}\in(0,1)$.
%
\end{lemma}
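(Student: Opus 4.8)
The plan is to derive this bounded-domain interpolation inequality from the classical scale-invariant Gagliardo--Nirenberg inequality on the whole space. Since $\partial\Omega$ is smooth, there is a bounded linear extension operator $E$ that maps $W^{1,2}(\Omega)$ into $W^{1,2}(\mathbb{R}^N)$ and simultaneously $L^\theta(\Omega)$ into $L^\theta(\mathbb{R}^N)$, with $Eu=u$ on $\Omega$ and operator norms depending only on $\Omega$ (a Stein-type total extension). First I would set $w=Eu$ and apply the classical inequality $\|w\|_{L^p(\mathbb{R}^N)}\le C\|\nabla w\|_{L^2(\mathbb{R}^N)}^{a}\|w\|_{L^\theta(\mathbb{R}^N)}^{1-a}$ on $\mathbb{R}^N$, where the exponent $a$ is forced by scaling: testing on the dilations $w(\lambda\cdot)$ and matching powers of $\lambda$ yields $\frac1p=a(\frac12-\frac1N)+(1-a)\frac1\theta$, which solves to exactly the stated value $a=\frac{\frac{N}{\theta}-\frac{N}{p}}{1-\frac{N}{2}+\frac{N}{\theta}}$. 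The admissibility constraint $a\in(0,1)$ corresponds to $\theta<p$ together with $p$ lying in the Sobolev range $p\le p^\ast$, consistent with the hypotheses.

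Next I would transfer the estimate back to $\Omega$. Since $\|w\|_{L^p(\mathbb{R}^N)}\ge\|u\|_{L^p(\Omega)}$ and $\|w\|_{L^\theta(\mathbb{R}^N)}\le C\|u\|_{L^\theta(\Omega)}$, the only quantity that still needs control is $\|\nabla w\|_{L^2(\mathbb{R}^N)}$. Boundedness of $E$ gives $\|\nabla w\|_{L^2(\mathbb{R}^N)}\le C\|u\|_{W^{1,2}(\Omega)}=C(\|\nabla u\|_{L^2(\Omega)}+\|u\|_{L^2(\Omega)})$, and it is precisely this additive $\|u\|_{L^2(\Omega)}$ contribution that produces the correction term $+\|u\|_{L^\theta(\Omega)}$ in the final bound: on $\mathbb{R}^N$ the pure product form holds, but on a bounded domain constants have zero gradient, so a product-only inequality cannot survive. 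Assuming for the moment $\theta\ge2$, Hölder on the finite-measure domain gives $\|u\|_{L^2(\Omega)}\le C\|u\|_{L^\theta(\Omega)}$; inserting the resulting bound $\|\nabla w\|_{L^2}\le C(\|\nabla u\|_{L^2(\Omega)}+\|u\|_{L^\theta(\Omega)})$ into the whole-space inequality, together with the elementary estimate $(x+y)^a\le x^a+y^a$ for $a\in(0,1)$, reproduces exactly $\|u\|_{L^p(\Omega)}\le C_{GN}(\|\nabla u\|_{L^2(\Omega)}^a\|u\|_{L^\theta(\Omega)}^{1-a}+\|u\|_{L^\theta(\Omega)})$.

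The main technical obstacle is the case $\theta<2$, where $\|u\|_{L^2(\Omega)}$ is no longer directly dominated by $\|u\|_{L^\theta(\Omega)}$. Here I would run a short bootstrap: apply the already-established inequality with exponent $2$ in place of $p$ (legitimate since $\theta<2<p^\ast$ lies in the admissible range) to write $\|u\|_{L^2(\Omega)}\le C(\|\nabla u\|_{L^2(\Omega)}^{b}\|u\|_{L^\theta(\Omega)}^{1-b}+\|u\|_{L^\theta(\Omega)})$, then use Young's inequality to split the gradient contribution into a small multiple of $\|\nabla u\|_{L^2(\Omega)}$ plus a constant multiple of $\|u\|_{L^\theta(\Omega)}$, and feed this back into the bound for $\|\nabla w\|_{L^2}$.

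Either way the scaling computation fixing $a$ is the conceptual heart, and the remaining work is routine interpolation. An alternative fully self-contained route, avoiding extension altogether, is Nirenberg's compactness argument: suppose the inequality failed along a sequence normalized by $\|u_n\|_{L^p(\Omega)}=1$, use Rellich--Kondrachov to extract a limit, and derive a contradiction from the simultaneous vanishing of both terms on the right-hand side; the delicate point there is controlling the rescaling so that the gradient term does not degenerate, which makes the extension approach the cleaner of the two.
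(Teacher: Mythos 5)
Your scaling computation for $a$ is correct, and the extension-plus-absorption strategy is sound for $\theta\ge 1$; note the paper itself gives no proof of this lemma (it is quoted from \cite{Zheng0}), where the argument runs through exactly the reduction your proposal is missing. The genuine gap is the range $\theta\in(0,1)$, which the lemma explicitly permits ($\theta\in(0,p)$) and which this paper actually needs: in Lemmas \ref{lemmasdffssdd45566645630223}--\ref{lemmaddffddfffrsedrffffffgg} the inequality is applied with lower exponent $\theta=\frac{2l_0}{m+p-1}$, $\theta=\frac{2}{1-\frac{2}{N}+p}$, or $\theta=\frac{2p_0}{m+p-1}$, all of which are $<1$ once $p$ is large. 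For $\theta<1$ your argument breaks at two points: $L^\theta$ is only quasi-normed, and the Stein-type total extension operator you invoke is bounded on $L^\theta$ only for $1\le\theta\le\infty$ (its averaging along rays is incompatible with the failure of the triangle inequality for $\theta<1$), while the ``classical'' whole-space Gagliardo--Nirenberg inequality is likewise standardly stated for lower exponent $\ge 1$. The standard repair, and essentially the proof in the cited source, is: apply the usual inequality with lower exponent $1$ (or $2$), then use the quasi-norm interpolation $\|u\|_{L^1(\Omega)}\le\|u\|_{L^\theta(\Omega)}^{\sigma}\|u\|_{L^p(\Omega)}^{1-\sigma}$ (valid for all $0<\theta<1<p$ by H\"{o}lder applied to $|u|=|u|^{\sigma}|u|^{1-\sigma}$), substitute, and absorb the resulting power of $\|u\|_{L^p(\Omega)}$ into the left-hand side by Young's inequality. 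Without this step your proof establishes a strictly weaker statement than the one the paper uses.

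A secondary, fixable flaw: your bootstrap for $1\le\theta<2$ is circular as written. You appeal to the ``already-established inequality with exponent $2$ in place of $p$,'' but at that stage you have only proved the inequality for $\theta\ge 2$, and the case $p=2$, $\theta<2$ is precisely of the form you are trying to establish. Two clean repairs: either interpolate $\|u\|_{L^2(\Omega)}\le\|u\|_{L^p(\Omega)}^{\lambda}\|u\|_{L^\theta(\Omega)}^{1-\lambda}$ and absorb the $\|u\|_{L^p(\Omega)}$ contribution into the left-hand side via Young, or run your extension estimate with $p=2$ directly, which yields $\|u\|_{L^2(\Omega)}\le C\bigl(\|\nabla u\|_{L^2(\Omega)}+\|u\|_{L^2(\Omega)}\bigr)^{b}\|u\|_{L^\theta(\Omega)}^{1-b}$, and then absorb the small multiple of $\|u\|_{L^2(\Omega)}$ produced by Young's inequality. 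Either patch closes that loop, but neither rescues the $\theta<1$ case, which requires the interpolation--absorption reduction above.
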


\begin{lemma}\label{lemma45xy1222232} (\cite{Zhengwwwwssddghjjkk1})
Suppose  that $\gamma\in (1,+\infty)$ and $g\in L^\gamma((0, T); L^\gamma(
\Omega))$.
 Consider the following evolution equation
 $$
 \left\{\begin{array}{ll}
v_t -\Delta v+v=g,~~~(x, t)\in
 \Omega\times(0, T ),\\
\disp\frac{\partial v}{\partial \nu}=0,~~~(x, t)\in
 \partial\Omega\times(0, T ),\\
v(x,0)=v_0(x),~~~(x, t)\in
 \Omega.\\
 \end{array}\right.
 $$
 For each $v_0\in W^{2,\gamma}(\Omega)$
such that $\disp\frac{\partial v_0}{\partial \nu}=0$ and any $g\in L^\gamma((0, T); L^\gamma(
\Omega))$, there exists a unique solution
$v\in W^{1,\gamma}((0,T);L^\gamma(\Omega))\cap L^{\gamma}((0,T);W^{2,\gamma}(\Omega)).$ In addition, if $s_0\in[0,T)$, $v(\cdot,s_0)\in W^{2,\gamma}(\Omega)(\gamma>N)$ with $\disp\frac{\partial v(\cdot,s_0)}{\partial \nu}=0,$
then there exists a positive constant $\lambda_0:=\lambda_0(\Omega,\gamma,N)$ such that  
$$
\begin{array}{rl}
&\disp{\int_{s_0}^Te^{\gamma s}\| v(\cdot,t)\|^{\gamma}_{W^{2,\gamma}(\Omega)}ds\leq\lambda_0\left(\int_{s_0}^Te^{\gamma s}
\|g(\cdot,s)\|^{\gamma}_{L^{\gamma}(\Omega)}ds+e^{\gamma s_0}(\|v_0(\cdot,s_0)\|^{\gamma}_{W^{2,\gamma}(\Omega)})\right).}\\
\end{array}
$$
\end{lemma}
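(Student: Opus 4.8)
The plan is to recast the scalar Neumann problem as an abstract linear evolution equation and to read off the stated bound from maximal $L^\gamma$-regularity of the underlying sectorial operator, with the exponential factor produced by a simple time-rescaling. Write $A:=-\Delta+1$ with domain $D(A)=\{\varphi\in W^{2,\gamma}(\Omega):\partial\varphi/\partial\nu=0\}$ on the base space $X:=L^\gamma(\Omega)$, $1<\gamma<\infty$. On a bounded smooth domain $A$ is positive and sectorial, generates a bounded analytic $C_0$-semigroup $(e^{-tA})_{t\ge0}$ on $X$, and elliptic regularity gives the norm equivalence $\|A\varphi\|_{L^\gamma}\simeq\|\varphi\|_{W^{2,\gamma}}$ on $D(A)$. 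In this language the problem is $v_t+Av=g$, $v(s_0)=v(\cdot,s_0)$, and the asserted solution class $W^{1,\gamma}((s_0,T);L^\gamma)\cap L^\gamma((s_0,T);W^{2,\gamma})$ is exactly the maximal-regularity class $\{v:v_t,\,Av\in L^\gamma((s_0,T);X)\}$.

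First I would settle existence, uniqueness and the unweighted a priori estimate through the abstract maximal-regularity theorem. Since $X=L^\gamma(\Omega)$ is a UMD space for $1<\gamma<\infty$ and the Neumann operator $A$ is R-sectorial (equivalently, admits a bounded $H^\infty$-calculus), the Dore--Venni/Weis theory yields a unique $v$ in the above class together with the estimate
\[ \|v_t\|_{L^\gamma((s_0,T);X)}+\|Av\|_{L^\gamma((s_0,T);X)}\le C\big(\|g\|_{L^\gamma((s_0,T);X)}+\|v(\cdot,s_0)\|_{\mathcal X_\gamma}\big), \]
where $\mathcal X_\gamma=(X,D(A))_{1-1/\gamma,\gamma}$ is the trace space; the nonhomogeneous initial datum is absorbed by subtracting $e^{-(t-s_0)A}v(\cdot,s_0)$ and estimating the homogeneous part by analyticity. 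Because $W^{2,\gamma}(\Omega)\hookrightarrow\mathcal X_\gamma$, the datum $v(\cdot,s_0)\in W^{2,\gamma}(\Omega)$ is admissible, which (using the norm equivalence above) already gives the inequality without the weight.

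To produce the exponential factor I would pass to the rescaled unknown $w:=e^{s}v$. A direct computation turns $v_t-\Delta v+v=g$ into $w_t-\Delta w=e^{s}g$ with unchanged Neumann condition and $w(\cdot,s_0)=e^{s_0}v(\cdot,s_0)$. Applying the maximal-regularity estimate to $w$ over $(s_0,T)$ and then undoing the scaling via $\|w\|_{W^{2,\gamma}}^\gamma=e^{\gamma s}\|v\|_{W^{2,\gamma}}^\gamma$, $\|e^{s}g\|_{L^\gamma}^\gamma=e^{\gamma s}\|g\|_{L^\gamma}^\gamma$ and $\|w(\cdot,s_0)\|_{W^{2,\gamma}}^\gamma=e^{\gamma s_0}\|v(\cdot,s_0)\|_{W^{2,\gamma}}^\gamma$ reproduces the claimed inequality. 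Here $\lambda_0$ is built from the maximal-regularity constant together with the elliptic equivalence $\|\cdot\|_{W^{2,\gamma}}\simeq\|(-\Delta+1)\cdot\|_{L^\gamma}$; the exponential decay of $(e^{-tA})_{t\ge0}$ forced by the zeroth-order term $+v$ (spectral bound of $-\Delta+1$ equal to $1$) is what renders this constant independent of $s_0$ and $T$. The hypothesis $\gamma>N$ is not needed for the estimate itself but only for the embedding $W^{2,\gamma}(\Omega)\hookrightarrow C^{1}(\bar\Omega)$ exploited wherever the lemma is later applied.

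The genuine obstacle is the maximal-regularity input itself, i.e.\ the boundedness, uniform in the interval, of the singular operator $g\mapsto A(\partial_t+A)^{-1}g$ on $L^\gamma((s_0,T);X)$. This is not elementary: it requires R-boundedness of the resolvent family $\{\lambda(\lambda+A)^{-1}:\lambda\in\Sigma\}$ together with the operator-valued Mikhlin multiplier theorem on the UMD space $X$. For the Neumann Laplacian on a smooth bounded domain the needed R-sectoriality (equivalently, the bounded $H^\infty$-calculus) is classical, so I would cite it rather than reprove it; everything else in the statement is then bookkeeping around the substitution $w=e^{s}v$ and elliptic regularity for $-\Delta+1$.
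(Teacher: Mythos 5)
Your architecture is exactly the one behind this lemma (the paper itself gives no proof: Lemma \ref{lemma45xy1222232} is imported by citation, and the argument behind the cited source is precisely maximal $L^\gamma$-regularity of the Neumann realization of $A=-\Delta+1$ on the UMD space $L^\gamma(\Omega)$ combined with the substitution $w=e^{s}v$). Your computation $w_t-\Delta w=e^{s}g$ is correct, the unweighted maximal-regularity step for the invertible operator $A$ is handled properly (trace space $(X,D(A))_{1-1/\gamma,\gamma}$, splitting off $e^{-(t-s_0)A}v(\cdot,s_0)$), and your remark that $\gamma>N$ plays no role in the estimate itself is accurate.

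The gap is in the final step, ``undoing the scaling.'' After the substitution the generator is the Neumann Laplacian $-\Delta$, which is \emph{not} invertible ($0$ is an eigenvalue with constant eigenfunctions), so interval-independent maximal regularity (Kalton--Weis on the half-line) controls only $\|w_t\|_{L^\gamma(L^\gamma)}$ and $\|\Delta w\|_{L^\gamma(L^\gamma)}$ --- it does not control $\|w\|_{L^\gamma(L^\gamma)}$, hence not $\|w\|_{L^\gamma(W^{2,\gamma})}$, and your identity $\|w\|^\gamma_{W^{2,\gamma}}=e^{\gamma s}\|v\|^\gamma_{W^{2,\gamma}}$ is applied to a quantity your estimate never produced. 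The weight $e^{\gamma s}$ exactly consumes the spectral gap of $-\Delta+1$: its slowest mode (spatial constants) decays exactly like $e^{-s}$. Indeed no argument can close this step, because the inequality as printed (full $W^{2,\gamma}$-norm, $\lambda_0$ independent of $T$) is false: take $g\equiv0$ and $v(\cdot,s_0)\equiv c>0$, so $v(\cdot,s)=ce^{-(s-s_0)}$; then $\int_{s_0}^{T}e^{\gamma s}\|v(\cdot,s)\|^{\gamma}_{W^{2,\gamma}(\Omega)}\,ds=c^{\gamma}|\Omega|e^{\gamma s_0}(T-s_0)$ grows linearly in $T$, while the right-hand side equals $\lambda_0e^{\gamma s_0}c^{\gamma}|\Omega|$. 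What is true --- and all this paper ever uses, cf.\ \dref{cz2.5kk1214114114rrggjjkk} and its analogues, where only $\int e^{\gamma s}\int_\Omega|\Delta v|^{\gamma}$ appears --- is the same estimate with $\|\Delta v(\cdot,s)\|^{\gamma}_{L^\gamma(\Omega)}$ in place of $\|v(\cdot,s)\|^{\gamma}_{W^{2,\gamma}(\Omega)}$, and your scheme does prove that: the zero-data part is the half-line maximal-regularity bound for $\|\Delta w\|$, while the datum contribution $\Delta e^{(s-s_0)\Delta}w(\cdot,s_0)=e^{(s-s_0)\Delta}\Delta w(\cdot,s_0)$ decays in $L^\gamma$ like $e^{-\mu_1(s-s_0)}$ ($\mu_1>0$ the first nonzero Neumann eigenvalue), because $\Delta w(\cdot,s_0)$ has zero mean by the divergence theorem. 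So restate the conclusion with $\Delta v$ (or weaken the weight to $e^{\kappa s}$ with $\kappa<\gamma$) and your proof is complete; as written, the last sentence claims something stronger than is provable.
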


The first lemma concerns the local solvability of problems \dref{1.ssderrfff1},  which can be proved by a
straightforward adaption of the corresponding procedures in Lemma 3.1 of \cite{Bellomo1216} (see also Lemma 1.1 of  \cite{Winkler37103} and  \cite{Wang79,Wang72,Zheng0}) to
our current setting:
%
%
\begin{lemma}\label{lemma70}
Suppose that $\Omega \subset \mathbb{R}^N (N\geq 1)$
 is a bounded domain with smooth boundary, $D$
satisfies \dref{91derfff61}--\dref{9162}.
Then for
 nonnegative triple
$(u_0, v_0)\in
C(\bar{\Omega})\times W^{1,\infty}(\bar{\Omega})$,
 problem \dref{1.ssderrfff1}
has a unique  local-in-time non-negative classical
functions
\begin{equation}
 \left\{\begin{array}{ll}
   u\in  C^0(\bar{\Omega}\times[0,T_{max}))\cap C^{2,1}(\bar{\Omega}\times(0,T_{max})),\\
    v \in   C^0(\bar{\Omega}\times[0,T_{max}))\cap C^{2,1}(\bar{\Omega}\times(0,T_{max})),\\
 \end{array}\right.\label{dffff1cvvfgg.1fghyuisdakkklll}
\end{equation}
where $T_{max}$ denotes the maximal existence time. 
%
%
%
Moreover, if  $T_{max}<+\infty$, then
\begin{equation}
\|u(\cdot, t)\|_{L^\infty(\Omega)}\rightarrow\infty~~ \mbox{as}~~ t\nearrow T_{max}
\label{1.163072x}
\end{equation}
is fulfilled.
\end{lemma}

According to the above existence theory,  for any $s\in(0, T_{max})$, $(u(\cdot, s), v(\cdot, s))\in C^2(\bar{\Omega})$, so that,
 without loss of generality, we can assume that there exists a constant 
$K$ such that
\begin{equation}\label{eqx45xx12112}
\|u_0\|_{C^2(\bar{\Omega})}\leq K~\mbox{and}~~~~\|v_0\|_{C^2(\bar{\Omega})}\leq K.
\end{equation}

The following properties of solutions of \dref{1.ssderrfff1} are well known.

\begin{lemma}\label{ssdeedrfe116lemma70hhjj}
Assume that $\mu>0.$
There exists a positive constant  
$ K_0$ such that
 the solution $(u, v)$ of  \dref{1.ssderrfff1} satisfies
%
%
\begin{equation}
 \begin{array}{rl}
 \|u(\cdot,t)\|_{L^1(\Omega)}\leq K_0~~~\mbox{for all}~~t\in (0, T_{max})
\end{array}\label{ssddaqwswddaassffssff3.ddfvbb10deerfgghhjuuloollgghhhyhh}
\end{equation}
and
\begin{equation}
\int_t^{t+\tau}\int_{\Omega}{u^{2}}\leq  K_0~~\mbox{for all}~~ t\in(0, T_{max}-\tau),
\label{bnmbncz2.5ghhjuyuivvddfggghhbssdddeennihjj}
\end{equation}
where \begin{equation}
\tau:=\min\{1,\frac{1}{6}T_{max}\}.
\label{cz2.5ghju48cfg924vbhu}
\end{equation}

%
\end{lemma}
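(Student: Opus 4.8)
The plan is to integrate the first equation of \dref{1.ssderrfff1} over $\Omega$. The no-flux boundary condition $(D(u)\nabla u-\chi u\nabla v)\cdot\nu=0$ guarantees that both divergence terms disappear after integration by parts, leaving only the logistic contribution. Writing $y(t):=\int_\Omega u(\cdot,t)$, this yields the mass identity
\[
y'(t)=\mu\int_\Omega u-\mu\int_\Omega u^2\qquad\text{for all }t\in(0,T_{max}).
\]
The key analytic input is then the Cauchy--Schwarz inequality $\big(\int_\Omega u\big)^2\le|\Omega|\int_\Omega u^2$, which bounds the dissipative term from below and turns the identity into the logistic differential inequality
\[
y'(t)\le\mu\,y(t)-\frac{\mu}{|\Omega|}\,y(t)^2\qquad\text{for all }t\in(0,T_{max}).
\]

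Second, I would extract the uniform bound \dref{ssddaqwswddaassffssff3.ddfvbb10deerfgghhjuuloollgghhhyhh} from this inequality by an elementary ODE comparison argument. Since the right-hand side above is strictly negative whenever $y>|\Omega|$, the constant function $\max\{y(0),|\Omega|\}$ is a supersolution, and comparison gives $y(t)\le\max\{\|u_0\|_{L^1(\Omega)},|\Omega|\}$ for all $t\in(0,T_{max})$. This settles the first assertion once $K_0$ is taken at least this large.

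Third, for the space--time estimate \dref{bnmbncz2.5ghhjuyuivvddfggghhbssdddeennihjj} I would return to the mass identity in the equivalent form $\mu\int_\Omega u^2=\mu y-y'$ and integrate it over a slab $[t,t+\tau]$. Using the $L^1$ bound $y\le K_0$ on the first term, the nonnegativity $y(t+\tau)\ge0$ together with $y(t)\le K_0$ on the telescoping term, and $\tau\le1$, I obtain
\[
\int_t^{t+\tau}\!\!\int_\Omega u^2\le\tau K_0+\frac{K_0}{\mu}\le K_0\Big(1+\frac{1}{\mu}\Big),
\]
so that, after enlarging $K_0$ to absorb the factor $1+\mu^{-1}$, both \dref{ssddaqwswddaassffssff3.ddfvbb10deerfgghhjuuloollgghhhyhh} and \dref{bnmbncz2.5ghhjuyuivvddfggghhbssdddeennihjj} hold with a single constant.

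I expect no substantial obstacle here: this is the standard mass-control computation for logistic chemotaxis systems, and the only points requiring genuine care are the verification that the quasilinear diffusion and chemotaxis fluxes produce no boundary contribution (which is exactly the content of the no-flux condition) and the comparison step for the logistic inequality. The hypothesis $\mu>0$ is essential—the quadratic absorption term is precisely what both damps the total mass and supplies the dissipated $L^2$ quantity; for $\mu=0$ the lemma fails in general, and the separate $L^{1+\epsilon}$-based arguments developed later for the critical case must be used instead.
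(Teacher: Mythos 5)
Your proof is correct and is precisely the standard mass-control argument that the paper invokes when it states this lemma without proof as ``well known'': integrate the first equation using the no-flux condition, apply Cauchy--Schwarz to get the logistic ODE inequality $y'\leq \mu y-\frac{\mu}{|\Omega|}y^2$ for $y(t)=\int_\Omega u$, conclude \dref{ssddaqwswddaassffssff3.ddfvbb10deerfgghhjuuloollgghhhyhh} by comparison, and recover \dref{bnmbncz2.5ghhjuyuivvddfggghhbssdddeennihjj} by integrating $\mu\int_\Omega u^2=\mu y-y'$ over $[t,t+\tau]$ with $\tau\leq1$. Your closing remarks on the role of $\mu>0$ and the boundary terms are also accurate, so nothing needs to be changed.
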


\begin{lemma}\label{333ssdeedrfe116lemma70hhjj}
Let $\mu=0,$ 
then
 the solution $(u, v)$ of  \dref{1.ssderrfff1} satisfies
%
%
\begin{equation}
 \begin{array}{rl}
 \|u(\cdot,t)\|_{L^1(\Omega)}= \|u_0\|_{L^1(\Omega)}~~~\mbox{for all}~~t\in (0, T_{max}).
\end{array}\label{333ssddaqwswddaassffssff3.ddfvbb10deerfgghhjuuloollgghhhyhh}
\end{equation}

%
\end{lemma}
\section{A priori estimates and the proof of the main results}
This section is devoted  to prove
Theorem \ref{theorem3} by preparing a series of lemmas in this section.
To this end, we first prove three important
lemmas which
 are similar to Lemma 3.4 of \cite{Zheng33312186} (see also \cite{Zhengssdddssddddkkllssssssssdefr23}).
\begin{lemma}\label{lemma45630223116}
Let 
 $H(y)=y({l_0}-1)\chi+\frac{1}{{l_0}+1}\left(y\frac{{l_0}+1}{{l_0}}\right)^{-{l_0} }({l_0}-1)\chi \lambda_0$ for $y>0.$
For any fixed ${l_0}\geq1,\chi,\lambda_0>0,$
Then $$\min_{y>0}H(y)=({l_0}-1)\lambda_0^{\frac{1}{{l_0}+1}}\chi.$$
\end{lemma}
\begin{proof}
It is easy to verify that $$H'(y)=({l_0}-1)\chi[1-  \lambda_0\left(\frac{{l_0} }{y({l_0}+1)} \right)^{{l_0}+1}].$$
Let $H'(y)=0$, we have
$$y=\lambda_0^{\frac{1}{{l_0}+1}}\frac{{l_0}}{{l_0}+1}.$$
Direct computation shows that $\lim_{y\rightarrow0^+}H(y)=+\infty$ and $\lim_{y\rightarrow+\infty}H(y)=+\infty$, so that,
$$\begin{array}{rl}
\min_{y>0}H(y)=H[\lambda_0^{\frac{1}{{l_0}+1}}\frac{{l_0}}{{l_0}+1}]=&\disp{({l_0}-1)\chi
({l_0^{\frac{1}{{l_0}+1}}}\frac{{l_0}}{{l_0}+1}+{l_0^{\frac{1}{{l_0}+1}}}\frac{1}{{l_0}+1})}\\
=&\disp{({l_0}-1)\lambda_0^{\frac{1}{{l_0}+1}}\chi.}\\
\end{array}
$$
\end{proof}

\begin{lemma}\label{lemma222245630223116}
Let \begin{equation}
{B}_1=\frac{1}{ {p}+1}\left[\frac{ {p}+1}{ {p}}\right]^{- {p} }\left(\frac{{p}-1}{{p}} \right)^{ {p}+1}
\label{2222zjscz2.5297x9630222211444125}
\end{equation}
and $\tilde{H}(y)=y+{B}_1y^{- {p} }\chi^{ {p}+1}\lambda_0$ for $y>0.$
For any fixed ${p}\geq1,\chi,\lambda_0>0,$
Then $$\min_{y>0}\tilde{H}(y)=\frac{({p}-1)}{{p}}\lambda_0^{\frac{1}{{p}+1}}\chi.$$
\end{lemma}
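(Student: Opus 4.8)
The plan is to treat this as an elementary one-variable calculus minimization, in exact parallel with the preceding Lemma \ref{lemma45630223116}. Since $\chi,\lambda_0>0$ and $p\geq1$ are fixed, I would regard $\tilde{H}(y)=y+B_1\chi^{p+1}\lambda_0\,y^{-p}$ as a function of the single variable $y>0$ of the form $y+cy^{-p}$ with constant coefficient $c=B_1\chi^{p+1}\lambda_0\geq0$. First I would differentiate to obtain
$$\tilde{H}'(y)=1-pB_1\chi^{p+1}\lambda_0\,y^{-p-1},$$
set $\tilde{H}'(y)=0$, and solve for the unique critical point $y_\ast=\left(pB_1\chi^{p+1}\lambda_0\right)^{\frac{1}{p+1}}=(pB_1)^{\frac{1}{p+1}}\chi\,\lambda_0^{\frac{1}{p+1}}$. (When $p=1$ one has $B_1=0$ and the claimed minimum is $0$, which is the degenerate case; I would dispose of it in one line and assume $p>1$ thereafter.)

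Next I would verify that $y_\ast$ is a global minimizer rather than merely a critical point. For $p>1$ we have $B_1>0$, so $\tilde{H}(y)\to+\infty$ both as $y\to0^+$ (from the $y^{-p}$ term) and as $y\to+\infty$ (from the $y$ term); since $\tilde{H}$ is smooth and has exactly one critical point on $(0,\infty)$, that point realizes $\min_{y>0}\tilde{H}(y)$, exactly as in the argument for $H$ in Lemma \ref{lemma45630223116}.

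The remaining, and genuinely load-bearing, step is the algebraic evaluation of $\tilde{H}(y_\ast)$, and here the main obstacle is simply bookkeeping of exponents rather than any conceptual difficulty. I would first collapse the complicated-looking constant, observing that
$$B_1=\frac{1}{p+1}\left(\frac{p}{p+1}\right)^{p}\frac{(p-1)^{p+1}}{p^{p+1}}=\frac{(p-1)^{p+1}}{p\,(p+1)^{p+1}},$$
so that $B_1^{\frac{1}{p+1}}=\dfrac{p-1}{p^{\frac{1}{p+1}}(p+1)}$. Substituting $y_\ast$ and using $\tilde{H}(y_\ast)=y_\ast+B_1\chi^{p+1}\lambda_0\,y_\ast^{-p}$, I would factor out $B_1^{\frac{1}{p+1}}\chi\,\lambda_0^{\frac{1}{p+1}}$ to get $\tilde{H}(y_\ast)=B_1^{\frac{1}{p+1}}\bigl(p^{\frac{1}{p+1}}+p^{-\frac{p}{p+1}}\bigr)\chi\,\lambda_0^{\frac{1}{p+1}}$, then use $p^{\frac{1}{p+1}}+p^{-\frac{p}{p+1}}=p^{-\frac{p}{p+1}}(p+1)$ and the identity $p^{-\frac{1}{p+1}}p^{-\frac{p}{p+1}}=p^{-1}$. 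The factors of $(p+1)$ cancel and the powers of $p$ combine to $\tfrac1p$, leaving precisely $\min_{y>0}\tilde{H}(y)=\tilde{H}(y_\ast)=\dfrac{p-1}{p}\lambda_0^{\frac{1}{p+1}}\chi$, as claimed. The deliberate form of $B_1$ in \dref{2222zjscz2.5297x9630222211444125} is exactly what makes this cancellation clean.
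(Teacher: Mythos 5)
Your proposal is correct and follows essentially the same route as the paper's own proof: differentiate $\tilde{H}$, locate the unique critical point $y_\ast=\left(B_1\lambda_0 p\right)^{\frac{1}{p+1}}\chi$, use $\tilde{H}(y)\to+\infty$ as $y\to0^+$ and $y\to+\infty$ to conclude it is the global minimizer, and then evaluate $\tilde{H}(y_\ast)=\left(B_1\lambda_0\right)^{\frac{1}{p+1}}\bigl(p^{\frac{1}{p+1}}+p^{-\frac{p}{p+1}}\bigr)\chi=\frac{p-1}{p}\lambda_0^{\frac{1}{p+1}}\chi$. Your explicit simplification of $B_1$ and your one-line remark on the degenerate case $p=1$ (where $B_1=0$ and the stated value $0$ is only an infimum, a point the paper passes over silently) are minor refinements of, not departures from, the paper's argument.
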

\begin{proof}
A straightforward computation shows that
 $$\tilde{H}'(y)=1- B_1{p} \lambda_0\left(\frac{\chi }{y} \right)^{{p}+1}.$$
Let $\tilde{H}'(y)=0$, we have
$$y=\left(B_1\lambda_0{p}\right)^{\frac{1}{{p}+1}}\chi.$$
On the other hand, by $\lim_{y\rightarrow0^+}\tilde{H}(y)=+\infty$ and $\lim_{y\rightarrow+\infty}\tilde{H}(y)=+\infty$, we have
$$\begin{array}{rl}
\min_{y>0}\tilde{H}(y)=\tilde{H}[\left(B_1\lambda_0{p}\right)^{\frac{1}{{p}+1}}\chi]=&\disp{\left(B_1\lambda_0\right)^{\frac{1}{{p}+1}}
({p}^{\frac{1}{{p}+1}}+{p}^{-\frac{{p}}{{p}+1}})\chi}\\
=&\disp{\frac{({p}-1)}{{p}}\lambda_0^{\frac{1}{{p}+1}}\chi.}\\
\end{array}
$$
\end{proof}

\begin{lemma}\label{lemma4ddffffghhhggg5630223116}
Let \begin{equation}\label{eqllooox45xx121ddffrttghhhiigghhjjoo12}
C_D>\frac{C_{GN}(1+\|u_0\|_{L^1(\Omega)})}{3}(2-\frac{2}{N})^2\max\{1,\lambda_0\}\chi
 \end{equation}
 and $$h(p) :=\frac{4C_{D}}{C_{GN}(1+\|u_0\|_{L^1(\Omega)})}-\frac{(1-\frac{2}{N}+p)^2}{{p}}\max\{1,\lambda_0\}\chi, $$
 where $p\geq1,C_{D},C_{GN},\lambda_0$ and $\chi$ are positive constants.
Then there exists a positive constant $p_0>1$ such that
\begin{equation}\label{eqx45xx121ddffrttghhhiigghhjjoo12}
h(p_0)>0.
\end{equation}
\end{lemma}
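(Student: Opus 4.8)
The plan is to regard $h$ as a continuous function of the single real variable $p$ on $(0,\infty)$ and to reduce the existence of a suitable $p_0>1$ to the single strict inequality $h(1)>0$ together with continuity of $h$ at $p=1$. Writing $A:=\frac{4C_D}{C_{GN}(1+\|u_0\|_{L^1(\Omega)})}$ and $B:=\max\{1,\lambda_0\}\chi$, one has $h(p)=A-\frac{(1-\frac{2}{N}+p)^2}{p}B$, which is a rational function with no pole on $(0,\infty)$ and is therefore continuous there; in particular it is continuous at $p=1$.

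The key step is to evaluate $h$ at the endpoint $p=1$ and to show that hypothesis \dref{eqllooox45xx121ddffrttghhhiigghhjjoo12} forces $h(1)>0$. Since $\frac{(1-\frac{2}{N}+1)^2}{1}=(2-\frac{2}{N})^2$, we obtain $h(1)=A-(2-\frac{2}{N})^2B$. Multiplying \dref{eqllooox45xx121ddffrttghhhiigghhjjoo12} by the positive factor $\frac{4}{C_{GN}(1+\|u_0\|_{L^1(\Omega)})}$ gives $A>\frac{4}{3}(2-\frac{2}{N})^2B$, so that $h(1)>\frac{4}{3}(2-\frac{2}{N})^2B-(2-\frac{2}{N})^2B=\frac{1}{3}(2-\frac{2}{N})^2B\geq0$; hence $h(1)>0$.

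Finally I would invoke continuity: because $h$ is continuous at $p=1$ and $h(1)>0$, there exists $\delta>0$ with $h(p)>0$ for all $p\in(1,1+\delta)$, and any such $p_0$ (for instance $p_0=1+\frac{\delta}{2}$) satisfies $p_0>1$ and \dref{eqx45xx121ddffrttghhhiigghhjjoo12}. As motivation for why $p=1$ is the correct reference point, note that the competing term $g(p):=\frac{(1-\frac{2}{N}+p)^2}{p}=\frac{(1-\frac{2}{N})^2}{p}+2(1-\frac{2}{N})+p$ has $g'(p)=1-\frac{(1-\frac{2}{N})^2}{p^2}>0$ for $p>1$, since $|1-\frac{2}{N}|\leq1$ for every $N\geq1$; thus $g$ is strictly increasing on $(1,\infty)$ and its infimum there equals the limiting value $g(1)=(2-\frac{2}{N})^2$, which is precisely the quantity controlled by the hypothesis. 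The only matters requiring care are the algebraic rearrangement of the hypothesis into the form $A>\frac{4}{3}(2-\frac{2}{N})^2B$ and the degenerate case $N=1$, where $(2-\frac{2}{N})^2=0$ and $h(1)=A>0$ holds trivially from $C_D,C_{GN},\chi>0$; no genuine analytic obstacle arises.
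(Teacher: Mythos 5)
Your proposal is correct and takes essentially the same approach as the paper: both establish $h(1)>0$ directly from \dref{eqllooox45xx121ddffrttghhhiigghhjjoo12} and then produce $p_0>1$ by continuity of $h$ at $p=1$. One point in your favor: writing $h(p)=A-\max\{1,\lambda_0\}\chi\, g(p)$ with $g(p)=\frac{(1-\frac{2}{N}+p)^2}{p}$, your computation $g'(p)=1-\frac{(1-\frac{2}{N})^2}{p^2}>0$ for $p>1$ is the accurate one, whereas the paper's displayed formula $h'(p)=\frac{(1-\frac{2}{N}+p)(p+\frac{2}{N}-1)}{p^2}>0$ is really $g'(p)$ and omits the negative factor $-\max\{1,\lambda_0\}\chi$, so $h$ is in fact nonincreasing on $[1,\infty)$; the lemma nevertheless survives because, exactly as you argue, continuity at $p=1$ together with $h(1)>0$ suffices.
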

\begin{proof}
Due to \dref{eqllooox45xx121ddffrttghhhiigghhjjoo12}, $h(1)>\frac{3C_{D}}{C_{GN}(1+\|u_0\|_{L^1(\Omega)})}-(2-\frac{2}{N})^2\max\{1,\lambda_0\}\chi>0$.
Next, by basic calculation, we derive that for any $p\geq1,$
$h'(p)=\frac{(1-\frac{2}{N}+p)(p+\frac{2}{N}-1)}{p^2}>0.$
Therefore,  from the continuity of $h$, there exists a positive constant $p_0>1$ such that   \dref{eqx45xx121ddffrttghhhiigghhjjoo12} holds.
\end{proof}

With the Lemma \ref{lemma45630223116} in hand, in view of  the
Maximal Sobolev regularity  approach, we derive the following new a-priori estimate
$\int_{\Omega}u^{\gamma_0}(x,t)dx$ (for all $1<\gamma_0<\frac{\chi\max\{1,\lambda_0\}}{(\chi\max\{1,\lambda_0\}-\mu)_{+}},t>0$ and $\mu>0$), which plays a critical role in obtaining the main results.
\begin{lemma}\label{lemmadderr45630223}
Let $(u,v)$ be a solution to \dref{1.ssderrfff1} on $(0,T_{max})$ and $\mu>0$.
%
%
%
 Then 
 for all $1<p<\frac{\chi\max\{1,\lambda_0\}}{(\chi\max\{1,\lambda_0\}-\mu)_{+}}$,
there exists a positive constant $C:=C(p,|\Omega|,\mu,\lambda_0,\chi)$ such that 
\begin{equation}
\int_{\Omega}u^p(x,t) \leq C ~~~\mbox{for all}~~ t\in(0,T_{max}).
\label{zjscz2.ddffrr5297x96302222114}
\end{equation}
\end{lemma}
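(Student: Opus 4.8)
The plan is to establish a weighted $L^p$ differential inequality for $u$, to break the chemotactic coupling through $\Delta v$ by means of the maximal Sobolev regularity estimate of Lemma \ref{lemma45xy1222232} applied to the second equation of \dref{1.ssderrfff1}, and finally to absorb the resulting unfavorable term $\int_\Omega u^{p+1}$ into the dissipation $-\mu\int_\Omega u^{p+1}$ furnished by the logistic source. Since only $D\ge0$ is used, the argument is insensitive to the value of $m$, as it must be.

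First I would test the first equation of \dref{1.ssderrfff1} against $pu^{p-1}$ and integrate by parts. As $D\ge0$, the diffusion contribution $-p(p-1)\int_\Omega D(u)u^{p-2}|\nabla u|^2$ is nonpositive and may be discarded; rewriting the chemotactic term through $\int_\Omega u^{p-1}\nabla\cdot(u\nabla v)=\frac{p-1}{p}\int_\Omega u^p\Delta v$, one arrives, with $f(t):=\int_\Omega u^p$, at
$$f'(t)+\mu p\int_\Omega u^{p+1}\le\chi(p-1)\int_\Omega u^p|\Delta v|+\mu p\int_\Omega u^p.$$
The sole obstruction to closing this estimate is the coupling through $\Delta v$, which I deliberately do not remove by inserting $-\Delta v=u-v-v_t$, but instead treat by maximal regularity.

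Next I would split the chemotactic term by Young's inequality with a free parameter $y>0$; the coefficient of $u^{p+1}$ becomes $(p-1)\chi y$ and that of $|\Delta v|^{p+1}$ becomes exactly the second summand of $H(y)$ in Lemma \ref{lemma45630223116} divided by $\lambda_0$. Because $\int_\Omega|\Delta v|^{p+1}$ is pointwise in time whereas maximal regularity is time-integrated, I pass to exponentially weighted integrals: setting $\gamma:=p+1$, I multiply by $e^{\gamma t}$, use $e^{\gamma t}f'=\frac{d}{dt}(e^{\gamma t}f)-\gamma e^{\gamma t}f$, and integrate over $(s_0,t)$ for a fixed $s_0\in(0,T_{max})$ at which $v(\cdot,s_0)\in C^2(\bar\Omega)$. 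Lemma \ref{lemma45xy1222232} with $g=u$, together with $\|\Delta v\|_{L^\gamma}\le\|v\|_{W^{2,\gamma}}$, then yields $\int_{s_0}^t e^{\gamma s}\int_\Omega|\Delta v|^{p+1}\le\lambda_0\int_{s_0}^t e^{\gamma s}\int_\Omega u^{p+1}+\lambda_0 e^{\gamma s_0}\|v(\cdot,s_0)\|_{W^{2,\gamma}}^{\gamma}$. After this substitution the coefficient of $\int_{s_0}^t e^{\gamma s}\int_\Omega u^{p+1}$ is precisely $H(y)$, so that Lemma \ref{lemma45630223116} permits choosing $y$ with $\min_{y>0}H(y)=(p-1)\lambda_0^{1/(p+1)}\chi$.

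The decisive step is to compare this minimal unfavorable coefficient with the favorable coefficient $\mu p$. Since $\lambda_0^{1/(p+1)}\le\max\{1,\lambda_0\}$ and since the hypothesis $1<p<\frac{\chi\max\{1,\lambda_0\}}{(\chi\max\{1,\lambda_0\}-\mu)_+}$ is equivalent to $(p-1)\chi\max\{1,\lambda_0\}<\mu p$, it follows that $(p-1)\lambda_0^{1/(p+1)}\chi<\mu p$, leaving a strictly positive surplus $\sigma:=\mu p-\min_{y>0}H(y)>0$ in front of $\int_{s_0}^t e^{\gamma s}\int_\Omega u^{p+1}$. The remaining lower-order terms $\gamma\int_{s_0}^t e^{\gamma s}f$ and $\mu p\int_{s_0}^t e^{\gamma s}\int_\Omega u^p$ are absorbed via $\int_\Omega u^p\le\varepsilon\int_\Omega u^{p+1}+C_\varepsilon|\Omega|$ with $\varepsilon$ chosen small enough that they are dominated by the $\sigma$-term; one is then left with $e^{\gamma t}f(t)\le C\,e^{\gamma t}+(\mathrm{data})\,e^{\gamma s_0}$, and dividing by $e^{\gamma t}$ gives $f(t)\le C$ uniformly for $t\in[s_0,T_{max})$, the interval $[0,s_0]$ being covered by continuity. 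The main difficulty I anticipate is the bookkeeping of the exponentially weighted time integration and the precise identification of the optimized coefficient with the quantity $H(y)$ of Lemma \ref{lemma45630223116}; a secondary caveat is that the regularity estimate of Lemma \ref{lemma45xy1222232} is stated for $\gamma>N$, so for small $p$ one either appeals directly to the $C^2$-smoothness of $v(\cdot,s_0)$ or first proves the bound at an admissible exponent and descends to smaller $p$ by Hölder's inequality.
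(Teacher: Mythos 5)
Your proposal is correct and follows essentially the same route as the paper's own proof: testing with $u^{p-1}$, discarding the nonnegative diffusion term, converting the chemotactic term into $\chi\frac{p-1}{p}\int_\Omega u^{p}|\Delta v|$, optimizing the Young parameter so that the combined coefficient is exactly the quantity minimized in Lemma \ref{lemma45630223116}, and closing via the exponentially weighted (variation-of-constants) form of the maximal Sobolev regularity estimate of Lemma \ref{lemma45xy1222232}, with the hypothesis on $p$ used in the equivalent form $(p-1)\chi\max\{1,\lambda_0\}<\mu p$. The only cosmetic differences are that the paper integrates from $t=0$ using the normalization \dref{eqx45xx12112} instead of from an interior time $s_0$, and that you explicitly flag the $\gamma>N$ restriction in Lemma \ref{lemma45xy1222232}, which the paper passes over in silence.
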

\begin{proof}
Multiplying the first equation of \dref{1.ssderrfff1}
  by $u^{{l_0}-1}$, integrating over $\Omega$ and using \dref{9162},
 we get
\begin{equation}
\begin{array}{rl}
&\disp{\frac{1}{{l_0}}\frac{d}{dt}\|u\|^{{l_0}}_{L^{{l_0}}(\Omega)}+C_D({{l_0}-1})\int_{\Omega}u^{m+{{l_0}-3}}|\nabla u|^2}
\\
\leq&\disp{-\chi\int_\Omega \nabla\cdot( u\nabla v)
  u^{{l_0}-1} +
\int_\Omega   u^{{l_0}-1}(\mu u-\mu u^2)~~\mbox{for all}~~t\in (0, T_{max}),}\\
\end{array}
\label{99922cz2.5114114}
\end{equation}
which implies that,
\begin{equation}
\begin{array}{rl}
\disp\frac{1}{{l_0}}\disp\frac{d}{dt}\|u\|^{{{l_0}}}_{L^{{l_0}}(\Omega)}\leq&\disp{-\frac{{l_0}+1}{{l_0}}\int_{\Omega} u^{l_0} -\chi\int_\Omega \nabla\cdot( u\nabla v)
  u^{{l_0}-1} }\\
 &+\disp{\int_\Omega \left(\frac{{l_0}+1}{{l_0}} u^{l_0}+  u^{{l_0}-1}(\mu u-\mu u^2)\right)~~\mbox{for all}~~t\in (0, T_{max}).}\\
\end{array}
\label{cz2.5kk1214114114}
\end{equation}
Next,  for any positive constant $\delta_1>0$, we derive from the  Young inequality  that
\begin{equation}
\begin{array}{rl}
&\disp{\int_\Omega  \left(\frac{{l_0}+1}{{l_0}} u^{l_0}+ u^{{l_0}-1}(\mu u-\mu u^2)\right)}\\
= &\disp{\frac{{l_0}+1}{{l_0}}\int_\Omega u^{l_0} +\mu\int_\Omega    u^{{l_0}}- \mu\int_\Omega  u^{{{l_0}+1}}}\\
\leq &\disp{(\delta_1- \mu)\int_\Omega u^{{{l_0}+1}} +C_1(\delta_1,{l_0})~~\mbox{for all}~~t\in (0, T_{max}),}
\end{array}
\label{cz2.563011228ddff}
\end{equation}
where $$C_1(\delta_1,{l_0})=\frac{1}{{l_0}+1}\left(\delta_1\frac{{l_0}+1}{{l_0}}\right)^{-{l_0} }
\left(\frac{{l_0}+1}{{l_0}}+\mu\right)^{{l_0}+1 }|\Omega|.$$
%
%
Now,
integrating by parts to the first term on the right hand side of \dref{99922cz2.5114114} and  using  the Young inequality,
we obtain 
\begin{equation}
\begin{array}{rl}
&\disp{-\chi\int_\Omega \nabla\cdot( u\nabla v)u^{{l_0}-1} }
\\
=&\disp{({{l_0}-1})\chi\int_\Omega  u^{{{l_0}-1}}\nabla u\cdot\nabla v }
\\
=&\disp{-\frac{{{l_0}-1}}{{l_0}}\chi \int_\Omega u^{{l_0}}\Delta v }
\\
\leq&\disp{\frac{{{l_0}-1}}{{l_0}}\chi \int_\Omega u^{l_0}|\Delta v|~~\mbox{for all}~~ t\in(0, T_{max}). }
\\
\end{array}
\label{cz2.56301911ffggggg4}
\end{equation}
Now, let \begin{equation}
\kappa_0:=\left(A_1\lambda_0{l_0}\right)^{\frac{1}{{l_0}+1}}\chi,
\label{cz2.56ssd30er191rrrr14}
\end{equation}
where
 \begin{equation}
{A}_1=\frac{1}{ {l_0}+1}\left[\frac{ {l_0}+1}{ {l_0}}\right]^{- {l_0} }\left(\frac{{l_0}-1}{{l_0}} \right)^{ {l_0}+1}.
\label{zjscz2.5297x9630222211444125}
\end{equation}
Additionally, by applying \dref{cz2.56301911ffggggg4} and the Young inequality,, we observe
\begin{equation}
\begin{array}{rl}
&\disp{-\chi\int_\Omega \nabla\cdot( u\nabla v)u^{{l_0}-1}  }
\\
\leq&\disp{\kappa_0\int_\Omega  u^{{l_0}+1}+\frac{1}{ {l_0}+1}\left[\kappa_0\frac{ {l_0}+1}{ {l_0}}\right]^{- {l_0} }\left(\frac{{l_0}-1}{{l_0}}\chi \right)^{ {l_0}+1}\int_\Omega |\Delta v|^{ {l_0}+1} }
\\
=&\disp{\kappa_0\int_\Omega  u^{{l_0}+1}+{A}_1\kappa_0^{- {l_0} }\chi^{ {l_0}+1}\int_\Omega |\Delta v|^{ {l_0}+1} ~~\mbox{for all}~~ t\in(0, T_{max}).}
\\
\end{array}
\label{cz2.563019114gghh}
\end{equation}
Substitute \dref{cz2.563011228ddff} and \dref{cz2.563019114gghh} into \dref{cz2.5kk1214114114},  we derive that
\begin{equation*}
\begin{array}{rl}
\disp\frac{1}{{l_0}}\disp\frac{d}{dt}\|u\|^{{{l_0}}}_{L^{{l_0}}(\Omega)}\leq&\disp{(\delta_1+\kappa_0- \mu)\int_\Omega u^{{{l_0}+1}} -\frac{{l_0}+1}{{l_0}}\int_{\Omega} u^{l_0} }\\
&+\disp{{A}_1\kappa_0^{- {l_0} }\chi^{ {l_0}+1}\int_\Omega |\Delta v|^{ {l_0}+1} +
C_1(\delta_1,{l_0})~~\mbox{for all}~~ t\in(0, T_{max}).}\\
\end{array}
\end{equation*}
For any $t\in (0,T_{max})$, applying the variation-of-constants formula to  the above inequality, we show that
\begin{equation}
\begin{array}{rl}
&\disp{\frac{1}{{l_0}}\|u(\cdot,t) \|^{{{l_0}}}_{L^{{l_0}}(\Omega)}}
\\
\leq&\disp{\frac{1}{{l_0}}e^{-( {l_0}+1) t }\|u_0(\cdot,)\|^{{{l_0}}}_{L^{{l_0}}(\Omega)}+(\delta_1+\kappa_0- \mu)\int_{0}^t
e^{-( {l_0}+1)(t-s)}\int_\Omega u^{{{l_0}+1}}}\\
&+\disp{{A}_1\kappa_0^{- {l_0} }\chi^{ {l_0}+1}\int_{0}^t
e^{-( {l_0}+1)(t-s)}\int_\Omega |\Delta v|^{ {l_0}+1} + C_1(\delta_1,{l_0})\int_{0}^t
e^{-( {l_0}+1)(t-s)}}\\
\leq&\disp{(\delta_1+\kappa_0- \mu)\int_{0}^t
e^{-( {l_0}+1)(t-s)}\int_\Omega u^{{{l_0}+1}}}\\
&+\disp{{A}_1\kappa_0^{- {l_0} }\chi^{ {l_0}+1}\int_{0}^t
e^{-( {l_0}+1)(t-s)}\int_\Omega |\Delta v|^{ {l_0}+1}+C_2({l_0},\delta_1),}\\
\end{array}
\label{cz2.5kk1214114114rrgg}
\end{equation}
where
$$C_2:=C_2({l_0},\delta_1)=\frac{1}{{l_0}}\|u_0 (\cdot,)\|^{{{l_0}}}_{L^{{l_0}}(\Omega)}+
 C_1(\delta_1,{l_0})\int_{0}^t
e^{-( {l_0}+1)(t-s)}ds.$$

Now, by Lemma \ref{lemma45xy1222232}, we have
\begin{equation}\label{cz2.5kk1214114114rrggjjkk}
\begin{array}{rl}
&\disp{{A}_1\kappa_0^{- {l_0} }\chi^{ {l_0}+1}\int_{0}^t
e^{-( {l_0}+1)(t-s)}\int_\Omega |\Delta v|^{ {l_0}+1}}
\\
=&\disp{{A}_1\kappa_0^{- {l_0} }\chi^{ {l_0}+1}e^{-( {l_0}+1)t}\int_{0}^t
e^{( {l_0}+1)s}\int_\Omega |\Delta v|^{ {l_0}+1} }\\
\leq&\disp{{A}_1\kappa_0^{- {l_0} }\chi^{ {l_0}+1}e^{-( {l_0}+1)t}\lambda_0[\int_{0}^t
\int_\Omega e^{( {l_0}+1)s}u^{ {l_0}+1} }\\
&+\disp{(\|v_0(\cdot,)\|^{{l_0}+1}_{L^{{l_0}+1}(\Omega)}+\|\Delta v_0(\cdot,)\|^{{l_0}+1}_{L^{{l_0}+1}(\Omega)})]}\\
\end{array}
\end{equation}
for all $t\in(0, T_{max})$.
By substituting \dref{cz2.5kk1214114114rrggjjkk} into \dref{cz2.5kk1214114114rrgg}, using \dref{cz2.56ssd30er191rrrr14} and Lemma \ref{lemma45630223116}, we get
\begin{equation}
\begin{array}{rl}
&\disp{\frac{1}{{l_0}}\|u(\cdot,t) \|^{{{l_0}}}_{L^{{l_0}}(\Omega)}}
\\
\leq&\disp{(\delta_1+\kappa_0+{A}_1\kappa_0^{- {l_0} }\chi^{ {l_0}+1}\lambda_0- \mu)\int_{0}^t
e^{-( {l_0}+1)(t-s)}\int_\Omega u^{{{l_0}+1}} }\\
&+\disp{{A}_1\kappa_0^{- {l_0} }\chi^{ {l_0}+1}e^{-( {l_0}+1) t }\lambda_0(\|v_0(\cdot,)\|^{{l_0}+1}_{L^{{l_0}+1}(\Omega)}+\|\Delta v_0(\cdot,)\|^{{l_0}+1}_{L^{{l_0}+1}(\Omega)})+C_2({l_0},
\delta_1)}\\
=&\disp{(\delta_1+\frac{({l_0}-1)}{{l_0}}\lambda_0^{\frac{1}{{l_0}+1}}\chi- \mu)\int_{0}^t
e^{-( {l_0}+1)(t-s)}\int_\Omega u^{{{l_0}+1}} }\\
&+\disp{{A}_1\kappa_0^{- {l_0} }\chi^{ {l_0}+1}e^{-( {l_0}+1) t }\lambda_0(\|v_0(\cdot,)\|^{{l_0}+1}_{L^{{l_0}+1}(\Omega)}+\|\Delta v_0\|^{{l_0}+1}_{L^{{l_0}+1}(\Omega)})+C_2({l_0},
\delta_1)}\\
\leq&\disp{(\delta_1+\frac{({l_0}-1)}{{l_0}}\max\{1,\lambda_0\}\chi- \mu)\int_{0}^t
e^{-( {l_0}+1)(t-s)}\int_\Omega u^{{{l_0}+1}} }\\
&+\disp{{A}_1\kappa_0^{- {l_0} }\chi^{ {l_0}+1}e^{-( {l_0}+1) t }\lambda_0(\|v_0(\cdot,)\|^{{l_0}+1}_{L^{{l_0}+1}(\Omega)}+\|\Delta v_0\|^{{l_0}+1}_{L^{{l_0}+1}(\Omega)})+C_2({l_0},
\delta_1).}\\
\end{array}
\label{cz2.5kk1214114114rrggkkll}
\end{equation}
For any $\varepsilon>0,$
we choose $l_0=\frac{\chi\max\{1,\lambda_0\}}{(\chi\max\{1,\lambda_0\}-\mu)_{+}}-\varepsilon.$
Then $$\frac{({l_0}-1)}{{l_0}}\max\{1,\lambda_0\}\chi<\mu,$$
thus, pick $\delta_1=\frac{1}{2}\varepsilon$ such that
$$0<\delta_1<\mu -\frac{({l_0}-1)}{{l_0}}\lambda_0^{\frac{1}{{l_0}+1}}\chi,$$
then in light of \dref{cz2.5kk1214114114rrggkkll}, we derive that there exists a positive constant $C_3$
such that
\begin{equation}
\begin{array}{rl}
&\disp{\int_{\Omega}u^{{l_0}}(x,t) dx\leq C_3~~\mbox{for all}~~t\in (0, T_{max}).}\\
\end{array}
\label{cz2.5kk1214114114rrggkklljjuu}
\end{equation}
Thereupon, combined with  the arbitrariness of $\varepsilon$ and the Young inequality, we can derive \dref{zjscz2.ddffrr5297x96302222114}.
The proof Lemma \ref{lemmadderr45630223} is complete.
\end{proof}

We proceed to establish the main step towards our boundedness proof.
\begin{lemma}\label{lemmasdffssdd45566645630223}
Assume that  $\mu>0$.
If
  \begin{equation}\label{gddffffnjjmmx1.731426677gg}
m> 2-\frac{2}{N}\frac{\chi\max\{1,\lambda_0\}}{(\chi\max\{1,\lambda_0\}-\mu)_+},
\begin{array}{ll}\\
 \end{array}
\end{equation}
then for $p>\max\{N+1,N(m+1)\}$, there exists a positive constant $C=(p,|\Omega|,\mu,\lambda_0,\chi,m,C_D)$  
such that 
 the solution of \dref{1.ssderrfff1} from Lemma \ref{lemma70} satisfies
\begin{equation}
\int_{\Omega}u^{p}(x,t)dx\leq C ~~~\mbox{for all}~~ t\in(0,T_{max}).
\label{33444dffgg4zjscz2.5297x96302222114}
\end{equation}
\end{lemma}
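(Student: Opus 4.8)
The plan is to raise the uniform $L^{\gamma_0}$-bound furnished by Lemma \ref{lemmadderr45630223} up to a uniform $L^p$-bound for arbitrary large $p$, by an energy argument in which the nonlinear diffusion absorbs the chemotactic contribution. Write $\gamma^\ast:=\frac{\chi\max\{1,\lambda_0\}}{(\chi\max\{1,\lambda_0\}-\mu)_+}$, so that the hypothesis \eqref{gddffffnjjmmx1.731426677gg} reads $m>2-\frac{2}{N}\gamma^\ast$. By continuity I would first fix an exponent $\gamma_0\in(1,\gamma^\ast)$ close enough to $\gamma^\ast$ that still $m>2-\frac{2}{N}\gamma_0$; Lemma \ref{lemmadderr45630223} then guarantees $\sup_{t<T_{max}}\int_\Omega u^{\gamma_0}\le C$, which serves as the base integrability level for all the interpolation below.

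Testing the first equation of \eqref{1.ssderrfff1} against $u^{p-1}$ and invoking \eqref{9162}, integration by parts turns the diffusion into a coercive term dominating $\int_\Omega|\nabla u^{(m+p-1)/2}|^2$ (with constant $\tfrac{4(p-1)C_D}{(m+p-1)^2}$), while the chemotactic term becomes $\frac{\chi(p-1)}{p}\int_\Omega u^p(-\Delta v)$ after shifting the derivative onto $v$. With $w:=u^{(m+p-1)/2}$ this yields the schematic inequality
$$\frac{1}{p}\frac{d}{dt}\int_\Omega u^p+\frac{4(p-1)C_D}{(m+p-1)^2}\int_\Omega|\nabla w|^2\le \frac{\chi(p-1)}{p}\int_\Omega u^p|\Delta v|+\mu\int_\Omega u^p-\mu\int_\Omega u^{p+1},$$
and the whole task reduces to dominating the right-hand side by the coercive gradient term together with the base bound.

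The main obstacle is the chemotactic term $\int_\Omega u^p|\Delta v|$. I would first apply Young's inequality with exponents $\frac{p+1}{p}$ and $p+1$ to split it as $\kappa\int_\Omega u^{p+1}+C(\kappa)\int_\Omega|\Delta v|^{p+1}$. The factor $\int_\Omega|\Delta v|^{p+1}$ is genuinely nonlocal in time (the system is parabolic--parabolic), which is the delicate point; exactly as in the proof of Lemma \ref{lemmadderr45630223}, I would therefore pass to the variation-of-constants representation with the exponential weight $e^{(p+1)s}$ and invoke the maximal Sobolev regularity estimate of Lemma \ref{lemma45xy1222232} (with $\gamma=p+1>N$, whence the requirement $p>N+1$), converting $\int_0^t e^{-(p+1)(t-s)}\int_\Omega|\Delta v|^{p+1}$ into a controlled multiple of $\int_0^t e^{-(p+1)(t-s)}\int_\Omega u^{p+1}$ plus initial-data contributions that are harmless by \eqref{eqx45xx12112}. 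Unlike in Lemma \ref{lemmadderr45630223}, here the diffusion term must be \emph{retained} inside the Duhamel integral, because for small $\mu$ the $u^{p+1}$ terms cannot be killed by a negative coefficient and must instead be absorbed by the gradient term.

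The final and decisive step is to absorb $\int_\Omega u^{p+1}=\|w\|_{L^R(\Omega)}^R$, with $R=\frac{2(p+1)}{m+p-1}$, into the coercive gradient integral via the Gagliardo--Nirenberg inequality of Lemma \ref{lemma41}, choosing the low exponent $\theta=\frac{2\gamma_0}{m+p-1}$ so that $\|w\|_{L^\theta(\Omega)}=\|u\|_{L^{\gamma_0}(\Omega)}^{(m+p-1)/2}$ is controlled by the base estimate. The crux of the whole argument is the verification that the associated gradient exponent satisfies $aR<2$: a direct computation gives $aR=\frac{2N(p+1-\gamma_0)}{\gamma_0(2-N)+N(m+p-1)}$, and $aR<2$ holds precisely when $m>2-\frac{2}{N}\gamma_0$, which is exactly how the hypothesis enters. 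Granting $aR<2$, Young's inequality yields $\int_\Omega u^{p+1}\le\varepsilon\int_\Omega|\nabla w|^2+C(\varepsilon)$ pointwise in $s$, so that after choosing $\kappa,\varepsilon$ small the gradient term (under the same time weight) absorbs all superlinear contributions, the residual constants being tamed by the prefactor $e^{-(p+1)t}$. The leftover logistic terms are handled by $\mu\int_\Omega u^p-\mu\int_\Omega u^{p+1}\le-\frac{\mu}{p+1}\int_\Omega u^{p+1}+\frac{\mu|\Omega|}{p+1}$, leading to a differential inequality of the form $\frac{d}{dt}\int_\Omega u^p+\int_\Omega u^p\le C$, whose integration gives the claimed uniform bound \eqref{33444dffgg4zjscz2.5297x96302222114}. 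The threshold $p>\max\{N+1,N(m+1)\}$ is what makes both the maximal-regularity step ($\gamma=p+1>N$) and the Gagliardo--Nirenberg exponents ($\theta<R$, $a\in(0,1)$, positive denominator) admissible.
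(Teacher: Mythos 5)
Your proposal follows essentially the same route as the paper's proof: testing with $u^{p-1}$, shifting the chemotactic term onto $|\Delta v|$, applying the variation-of-constants formula with weight $e^{-(p+1)(t-s)}$ while retaining the diffusion term inside the Duhamel integral, invoking the maximal Sobolev regularity of Lemma \ref{lemma45xy1222232} to trade $\int|\Delta v|^{p+1}$ for $\int u^{p+1}$, and then absorbing $\int u^{p+1}$ into the gradient term via Gagliardo--Nirenberg with base exponent $l_0=\gamma^\ast-\varepsilon$ from Lemma \ref{lemmadderr45630223}, where your condition $aR<2$ is exactly the paper's verification that $\frac{N(p+1)-Nl_0}{(2-N)l_0+N(m+p-1)}<1$. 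The argument is correct (your closing phrase about a pointwise differential inequality is loose wording, but the weighted-integral absorption you describe beforehand is precisely what the paper does).
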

\begin{proof}
For any $p>\max\{N+1,N(m+1),l_0-1,1,1-m+\frac{N-2}{N}l_0\}$,
we multiply the first equation of \dref{1.ssderrfff1} by ${u^{p-1}}$ and integrate the resulting equation to discover

\begin{equation}
\begin{array}{rl}
&\disp{\frac{1}{{p}}\frac{d}{dt}\|u\|^{{p}}_{L^{{p}}(\Omega)}+C_{D}({p}-1)\int_{\Omega}(u+1)^{{m+{p}-3}}|\nabla u|^2}
\\
\leq&\disp{-\chi\int_\Omega  u^{p-1}\nabla\cdot(u
\nabla v)+
\int_\Omega   u^{{p}-1}(\mu u-\mu u^2) }\\
=&\disp{\chi(p-1)\int_\Omega   u^{p-1}
\nabla u\cdot\nabla v+
\int_\Omega   u^{{p}-1}(\mu u-\mu u^2)}\\
\leq&\disp{\chi(p-1)\int_\Omega   u^{p-1}
\nabla u\cdot\nabla v+\mu
\int_\Omega   u^{{p}}~~\mbox{for all}~~ t\in(0,T_{max}),}\\
\end{array}
\label{1113333cz2.5114114}
\end{equation}
which leads to 
\begin{equation}
\begin{array}{rl}
&\disp{\frac{1}{{p}}\frac{d}{dt}\|u\|^{{{p}}}_{L^{{p}}(\Omega)}+C_{D}({p}-1)\int_{\Omega} u^{{m+{p}-3}}|\nabla u|^2}
\\
\leq&\disp{-\frac{{p}+1}{{p}}\int_{\Omega}  u^{p} +\chi(p-1)\int_\Omega   u^{p-1}
\nabla u\cdot\nabla v
   + (\frac{{p}+1}{{p}}+\mu)\int_\Omega  u^{p}}\\
\end{array}
\label{1113333cz2.5kk1214114114}
\end{equation}
for all $t\in(0,T_{max}).$

Here,  the Young inequality guarantees
that
%
%
\begin{equation}
\begin{array}{rl}
&\disp{ (\frac{{p}+1}{{p}}+\mu)\int_\Omega    u^p \leq\int_\Omega  u^{{{p}+1}} +C_1(p),}
\end{array}
\label{33ddddd33cz2.563011228ddff}
\end{equation}
where
$$C_1({p})=\frac{1}{{p}+1}\left(\frac{{p}+1}{{p}}\right)^{-p }
\left( \frac{{p}+1}{{p}}+\mu\right)^{{p}+1}|\Omega|.$$
 Once more integrating by parts, 
 we also find
that
\begin{equation}
\begin{array}{rl}
&\disp{(p-1)\int_\Omega   u^{p-1}
\nabla u\cdot\nabla v }
\\
=&\disp{\frac{\chi(p-1)}{p}\int_\Omega \nabla u^{p} \cdot\nabla v  }
\\
\leq&\disp{\chi\frac{({{p}-1})}{p}\int_\Omega   u^{p}|\Delta v| .}
\\
\end{array}
\label{3333c334444z2.563019114}
\end{equation}
Here we use the Young inequality to estimate the integrals on the right of \dref{3333c334444z2.563019114} according
to
\begin{equation}
\begin{array}{rl}
&\disp{\chi\frac{({{p}-1})}{p}\int_\Omega   u^{p}|\Delta v|}
\\
\leq&\disp{\int_\Omega   u^{{p}+1}+\frac{1}{ { {p}+1}}\left[\frac{ { {p}+1}}{ p}\right]^{- p}\left(\chi\frac{({{p}-1})}{p} \right)^{ { {p}+1}}\int_\Omega |\Delta v|^{ { {p}+1}} }
\\
=&\disp{\int_\Omega   u^{{p}+1}+{A}_1\int_\Omega |\Delta v|^{ { {p}+1}} ,}
\\
\end{array}
\label{3333ddfffcz2.563019114gghh}
\end{equation}
where 
$$A_1:=\frac{1}{ { {p}+1}}\left[\frac{ { {p}+1}}{ p}\right]^{- p}\left(\chi\frac{({{p}-1})}{p} \right)^{ { {p}+1}}.$$
While \dref{1113333cz2.5kk1214114114}, \dref{33ddddd33cz2.563011228ddff} and \dref{3333ddfffcz2.563019114gghh} imply that
\begin{equation}\label{11113223444333c334444z2.563019114}
\begin{array}{rl}
&\disp\frac{1}{{p}}\disp\frac{d}{dt}\|u\|^{{{p}}}_{L^{{p}}(\Omega)}+\frac{4C_{D}(p-1)}{(m+p-1)^2}\|\nabla    u^{\frac{m+p-1}{2}}\|_{L^2(\Omega)}^{2}\\
\leq&\disp{2\int_\Omega   u^{{{p}+1}} -\frac{{p}+1}{{p}}\int_{\Omega}  u^{p} }\\
&+\disp{{A}_1\int_\Omega |\Delta v|^{ { {p}+1}} +
C_1({p})~~\mbox{for all}~~ t\in(0,T_{max}).}\\
\end{array}
\end{equation}

Employing the variation-of-constants formula to \dref{11113223444333c334444z2.563019114}, we obtain
\begin{equation}
\begin{array}{rl}
&\disp{\frac{1}{{p}}\|u(\cdot,t) \|^{{{p}}}_{L^{{p}}(\Omega)}}
\\
\leq&\disp{\frac{1}{{p}}e^{-({p}+1)t}\|u_0(\cdot,)\|^{{{p}}}_{L^{{p}}(\Omega)}-\frac{4C_{D}(p-1)}{(m+p-1)^2}\int_{0}^t
e^{-({p}+1)(t-s)}\|\nabla    u^{\frac{m+p-1}{2}}\|_{L^2(\Omega)}^{2}ds}\\
&\disp{+2\int_{0}^t
e^{-({p}+1)(t-s)}\int_\Omega  u^{{{p}+1}} ds}\\
&+\disp{{A}_1\int_{0}^t
e^{-({p}+1)(t-s)}\int_\Omega |\Delta v|^{ {p}+1} dxds+ C_1({p})\int_{0}^t
e^{-({p}+1)(t-s)}ds}\\
\leq&\disp{2 \int_{0}^t
e^{-({p}+1)(t-s)}\int_\Omega  u^{{{p}+1}} ds-\frac{4C_{D}(p-1)}{(m+p-1)^2}\int_{0}^t
e^{-({p}+1)(t-s)}\|\nabla    u^{\frac{m+p-1}{2}}\|_{L^2(\Omega)}^{2}ds}\\
&+\disp{{A}_1\int_{0}^t
e^{-({p}+1)(t-s)}\int_\Omega |\Delta v|^{ {p}+1} dxds+C_2({p})}\\
\end{array}
\label{3333cffggghz2.5kk1214114114rrgg}
\end{equation}
with
$$
\begin{array}{rl}
C_2:=C_2({p})=&\disp\frac{1}{{p}}e^{-({p}+1)t}\|u_0\|^{{{p}}}_{L^{{p}}(\Omega)}+
 C_1({p})\int_{0}^t
e^{-({p}+1)(t-s)}ds.\\
\end{array}
$$
Now, due to
Lemma  \ref{lemma45xy1222232} 
and the second equation of \dref{1.ssderrfff1} and using the H\"{o}lder inequality, we have
\begin{equation}\label{3dfffff333cz2.5kkffffffe3457899991214114rrggjjkk}
\begin{array}{rl}
&\disp{{A}_1\int_{0}^t
e^{-({p}+1)(t-s)}\int_\Omega |\Delta v|^{ {p}+1} ds}
\\
=&\disp{{A}_1e^{-({p}+1)t}\int_{0}^t
e^{({p}+1)s}\int_\Omega |\Delta v|^{ {p}+1} ds}\\
\leq&\disp{{A}_1e^{-({p}+1)t}\lambda_0\left[\int_{0}^t
\int_\Omega e^{({p}+1)s} u^ {{p}+1 } ds+\|v_0\|^ {{p}+1 }_{W^{2,  {{p}+1 }}}\right]}\\
\leq&\disp{{A}_1e^{-({p}+1)t}\lambda_0\int_{0}^t
 e^{({p}+1)s}u^ {{p}+1 }ds+C_3}\\
\end{array}
\end{equation}
for all $t\in(0, T_{max})$,
where  $C_3={A}_1e^{-({p}+1)t}\lambda_0\|v_0\|^ {{p}+1 }_{W^{2,  {{p}+1 }}}.$
Inserting \dref{3dfffff333cz2.5kkffffffe3457899991214114rrggjjkk} into \dref{3333cffggghz2.5kk1214114114rrgg}, we conclude that
\begin{equation}
\begin{array}{rl}
\disp{\frac{1}{{p}}\|u(\cdot,t) \|^{{{p}}}_{L^{{p}}(\Omega)}}\leq&\disp{(2+{A}_1\lambda_0) \int_{0}^t
e^{-({p}+1)(t-s)}\int_\Omega  u^{{{p}+1}} ds}\\
&\disp{-\frac{4C_{D}(p-1)}{(m+p-1)^2}\int_{0}^t
e^{-({p}+1)(t-s)}\|\nabla    u^{\frac{m+p-1}{2}}\|_{L^2(\Omega)}^{2}ds+C_2+C_3.}\\
\end{array}
\label{3333cffggghhjjghzddfggg2.5kk1214114114rrgg}
\end{equation}
for all $t\in(0, T_{max})$.
Let $l_0=\frac{\chi\max\{1,\lambda_0\}}{(\chi\max\{1,\lambda_0\}-\mu)_{+}}-\varepsilon$, where $\varepsilon=\frac{1}{3}\frac{N}{2}(m-2+\frac{2}{N}\frac{\chi\max\{1,\lambda_0\}}{[\chi\max\{1,\lambda_0\}-\mu]_{+}})$.
On the other hand, since $m>2-\frac{2}{N}\frac{\chi\max\{1,\lambda_0\}}{[\chi\max\{1,\lambda_0\}-\mu]_{+}},$ yields to $p+1<m+p-1+\frac{2}{N}l_0,$
so that in particular, according to
by the Gagliardo--Nirenberg inequality and \dref{cz2.5kk1214114114rrggkklljjuu}, 
one can get there exist positive constants  $C_4$ and
$C_5$ 
such that
\begin{equation}
\begin{array}{rl}
&\disp(2+{A}_1\lambda_0)\int_{\Omega} u^{p+1}\\
=&\disp{(2+{A}_1\lambda_0)\|  u^{\frac{m+p-1}{2}}\|^{\frac{2(p+1)}{m+p-1}}_{L^{\frac{2(p+1)}{m+p-1}}(\Omega)}}\\
\leq&\disp{C_{4}(\|\nabla    u^{\frac{m+p-1}{2}}\|_{L^2(\Omega)}^{\frac{m+p-1}{p+1}}\|   u^{\frac{m+p-1}{2}}\|_{L^\frac{2l_0}{m+p-1 }(\Omega)}^{1-\frac{m+p-1}{p+1}}+\|   u^{\frac{m+p-1}{2}}\|_{L^\frac{2l_0}{m+p-1 }(\Omega)})^{\frac{2(p+1)}{m+p-1}}}\\
\leq&\disp{C_{5}(\|\nabla    u^{\frac{m+p-1}{2}}\|_{L^{2}(\Omega)}^{2\frac{N(p+1)-Nl_0}{(2-N)l_0+N(m+p-1)}}+1).}\\
\end{array}
\label{123cz2.57151hhdhhjjjdfffkkhhhjddffffgukildrftjj}
\end{equation}
In view of $m>2-\frac{2}{N}\frac{\chi\max\{1,\lambda_0\}}{[\chi\max\{1,\lambda_0\}-\mu]_{+}}$, by some basic
calculation,
we derive that $$\frac{N(p+1)-Nl_0}{(2-N)l_0+N(m+p-1)}<1,$$
so that, with  the help of the Young inequality, we derive that for any $\delta_1>0$,
\begin{equation}
\begin{array}{rl}
&\disp(2+{A}_1\lambda_0)\int_{\Omega} u^{p+1}\leq
\disp{\delta_1\|\nabla    u^{\frac{m+p-1}{2}}\|_{L^{2}(\Omega)}^{2}+C_6.}\\
\end{array}
\label{123cz2.571sddff51hhdhhjdfffkkhjdfffgukildrftjj}
\end{equation}
In combination with \dref{3333cffggghhjjghzddfggg2.5kk1214114114rrgg} and \dref{123cz2.571sddff51hhdhhjdfffkkhjdfffgukildrftjj} and choosing $\delta_1$ appropriately small, this shows that
  \begin{equation}
\int_{\Omega}u^{p}(x,t)dx\leq C_{7} ~~~\mbox{for all}~~ t\in(0,T_{max}),
\label{334444zjscz2.5297dfggggx96302222114}
\end{equation}
which together with the H\"{o}lder inequality implies the result.
The proof of Lemma \ref{lemmasdffssdd45566645630223} is completed.
\end{proof}

\begin{lemma}\label{lemma45566645630223}
Assume that  $\mu=0$.
If
  \begin{equation}\label{gddffffnjjmmx1.731426677gg}
m> 2-\frac{2}{N}
\begin{array}{ll}\\
 \end{array}
\end{equation}
or
 \begin{equation}\label{ddffffgddffffnjjmmx1.731426677gg}
m= 2-\frac{2}{N}~~\mbox{and}~~~C_D>\frac{C_{GN}(1+\|u_0\|_{L^1(\Omega)})}{3}(2-\frac{2}{N})^2\max\{1,\lambda_0\}\chi,
\begin{array}{ll}\\
 \end{array}
\end{equation}
then there exists a positive constant $p_0>1$   
such that 
 the solution of \dref{1.ssderrfff1} from Lemma \ref{lemma70} satisfies
\begin{equation}
\int_{\Omega}u^{p_0}(x,t)dx\leq C ~~~\mbox{for all}~~ t\in(0,T_{max}).
\label{33sdfffff4444zjscz2.5297x96302222114}
\end{equation}
\end{lemma}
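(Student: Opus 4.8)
The plan is to run the same maximal-Sobolev-regularity scheme as in Lemma \ref{lemmasdffssdd45566645630223}, but with the logistic damping $-\mu\int_\Omega u^{p+1}$ now absent: the only a priori information available is the conservation of mass $\|u(\cdot,t)\|_{L^1(\Omega)}=\|u_0\|_{L^1(\Omega)}$ from Lemma \ref{333ssdeedrfe116lemma70hhjj}, and the role of the absorbing ``good'' term must be played entirely by the nonlinear diffusion. First I would fix an exponent $p_0$ large enough, in particular $p_0>\max\{1,N-1\}$ so that $\gamma:=p_0+1>N$ as required by Lemma \ref{lemma45xy1222232}; testing the first equation of \dref{1.ssderrfff1} against $u^{p_0-1}$ and using \dref{9162} produces
\[
\frac{1}{p_0}\frac{d}{dt}\|u\|_{L^{p_0}(\Omega)}^{p_0}+\frac{4C_D(p_0-1)}{(m+p_0-1)^2}\big\|\nabla u^{\frac{m+p_0-1}{2}}\big\|_{L^2(\Omega)}^{2}\le \chi\frac{p_0-1}{p_0}\int_\Omega u^{p_0}|\Delta v|,
\]
where the chemotaxis term has been integrated by parts exactly as in \dref{3333c334444z2.563019114}. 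Adding $\frac{p_0+1}{p_0}\int_\Omega u^{p_0}$ to both sides to install an exponential decay rate $p_0+1$, and then splitting both $\int_\Omega u^{p_0}|\Delta v|$ and $\int_\Omega u^{p_0}$ by Young's inequality as in \dref{3333ddfffcz2.563019114gghh}, yields a differential inequality whose right-hand side carries the two integrals $\int_\Omega u^{p_0+1}$ and $\int_\Omega|\Delta v|^{p_0+1}$.

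Next I would apply the variation-of-constants formula and feed the term $\int_0^t e^{(p_0+1)s}\int_\Omega|\Delta v|^{p_0+1}$ into the maximal Sobolev regularity estimate of Lemma \ref{lemma45xy1222232}, converting it into $\lambda_0\int_0^t e^{(p_0+1)s}\int_\Omega u^{p_0+1}$ plus a datum contribution. Collecting terms and optimizing the free Young parameter in the chemotaxis split exactly as in Lemmas \ref{lemma45630223116}--\ref{lemma222245630223116} reduces the whole matter to controlling a single bad integral $C_\star\int_0^t e^{-(p_0+1)(t-s)}\int_\Omega u^{p_0+1}\,ds$ against the accumulated diffusion term $\frac{4C_D(p_0-1)}{(m+p_0-1)^2}\int_0^t e^{-(p_0+1)(t-s)}\|\nabla u^{\frac{m+p_0-1}{2}}\|_{L^2(\Omega)}^2\,ds$. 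The decisive estimate is the Gagliardo--Nirenberg interpolation of Lemma \ref{lemma41} applied to $w:=u^{(m+p_0-1)/2}$ with $q=\frac{2(p_0+1)}{m+p_0-1}$ and lower exponent $\theta=\frac{2}{m+p_0-1}$; this choice of $\theta$ is forced by the available bound, since $\|w\|_{L^\theta(\Omega)}=\|u\|_{L^1(\Omega)}^{(m+p_0-1)/2}$ is controlled by Lemma \ref{333ssdeedrfe116lemma70hhjj}. A direct computation shows that the exponent on $\|\nabla w\|_{L^2(\Omega)}$ produced after raising to the power $q$ equals $\frac{Np_0}{1-\frac N2+\frac N2(m+p_0-1)}$, which is strictly less than $2$ precisely when $m>2-\frac2N$ and equal to $2$ precisely when $m=2-\frac2N$.

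In case (i), $m>2-\frac2N$, the interpolation is subcritical for every admissible $p_0$, so Young's inequality absorbs $C_\star\int_\Omega u^{p_0+1}$ into the diffusion term with an arbitrarily small constant and a harmless additive constant; the resulting scalar ODE then gives $\int_\Omega u^{p_0}\le C$ uniformly in $t$, which is \dref{33sdfffff4444zjscz2.5297x96302222114}.

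The step I expect to be the main obstacle is the critical case (ii), $m=2-\frac2N$, where the Gagliardo--Nirenberg exponent is exactly $2$ and the bad integral can be absorbed only if its \emph{assembled} constant is strictly smaller than the diffusion constant $\frac{4C_D(p_0-1)}{(m+p_0-1)^2}$. This demands careful bookkeeping of the sharp factors $C_{GN}$, the conserved mass $\|u_0\|_{L^1(\Omega)}$ (which enters through $\|w\|_{L^\theta(\Omega)}^{(1-a)q}=\|u_0\|_{L^1(\Omega)}^{2/N}$ in this critical exponent balance), the maximal-regularity constant $\lambda_0$, and $\chi$. The smallness hypothesis $C_D>\frac{C_{GN}(1+\|u_0\|_{L^1(\Omega)})}{3}(2-\frac2N)^2\max\{1,\lambda_0\}\chi$ is exactly what guarantees, \emph{via} Lemma \ref{lemma4ddffffghhhggg5630223116}, that $h(p_0)>0$ for a suitable (indeed any) $p_0>\max\{1,N-1\}$, that is, that the diffusion constant strictly dominates the assembled interpolation constant. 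With this strict domination in hand, the same absorption-plus-ODE argument closes the estimate and delivers \dref{33sdfffff4444zjscz2.5297x96302222114}.
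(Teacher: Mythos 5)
Your overall scheme is exactly the paper's: test with $u^{p-1}$, install the decay rate $p+1$, split the chemotaxis term by Young with the optimized parameter $\varepsilon_2=\left(B_1\lambda_0 p\right)^{\frac{1}{p+1}}\chi$, convert $\int_0^te^{(p+1)s}\int_\Omega|\Delta v|^{p+1}$ into $\lambda_0\int_0^te^{(p+1)s}\int_\Omega u^{p+1}$ via Lemma \ref{lemma45xy1222232}, and close by Gagliardo--Nirenberg against the conserved mass. Your exponent bookkeeping is also correct: the gradient exponent $aq=\frac{Np_0}{1-\frac N2+\frac N2(m+p_0-1)}$ is $<2$ iff $m>2-\frac2N$ and $=2$ iff $m=2-\frac2N$, the mass enters the critical balance through $\|w\|_{L^\theta(\Omega)}^{(1-a)q}=\|u\|_{L^1(\Omega)}^{2/N}$, and the absorption condition is indeed equivalent to $h(p_0)>0$ of Lemma \ref{lemma4ddffffghhhggg5630223116}; this matches \dref{123cz2.57151hhddfffkkhhhjddffffgukildrftjj}--\dref{3333cz2.5kk121fttyuiii4114114rrgg}.

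There is, however, a genuine gap in your critical case $m=2-\frac2N$: you fix $p_0>\max\{1,N-1\}$ so that $\gamma=p_0+1>N$, and assert that the hypothesis on $C_D$ yields $h(p_0)>0$ ``for a suitable (indeed any) $p_0>\max\{1,N-1\}$''. This is false. Writing $g(p):=\frac{(1-\frac2N+p)^2}{p}$, one has $g'(p)=\frac{(1-\frac2N+p)(p-1+\frac2N)}{p^2}\geq0$ for $p\geq1$, so $h$ is \emph{decreasing} in $p$ with $h(p)\to-\infty$ as $p\to\infty$ (the sign of $h'$ claimed in the paper's Lemma \ref{lemma4ddffffghhhggg5630223116} is itself a slip, but only continuity near $p=1$ is ever used). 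The assumption \dref{eqllooox45xx121ddffrttghhhiigghhjjoo12} guarantees only $h(1)>\frac{C_D}{C_{GN}(1+\|u_0\|_{L^1(\Omega)})}>0$, hence by continuity $h(p_0)>0$ for some $p_0>1$ \emph{close to} $1$; when $N\geq3$ and $C_D$ exceeds the threshold only slightly, there is no admissible $p_0>N-1$ with $h(p_0)>0$, and your absorption step collapses. The paper avoids this tension: it takes precisely such a $p_0$ near $1$ (which is all the lemma claims), applies maximal Sobolev regularity with $\gamma=p_0+1$ -- legitimate because maximal $L^\gamma$-regularity holds for every $\gamma\in(1,\infty)$, the restriction $\gamma>N$ in the statement of Lemma \ref{lemma45xy1222232} being inessential and indeed ignored by the paper at this point -- and defers all large exponents to the separate bootstrap Lemma \ref{lemmaddffddfffrsedrffffffgg}, where the interpolation against the $L^{p_0}$ bound is subcritical precisely because $p_0>1$. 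To repair your argument, drop the requirement $p_0+1>N$, take $p_0$ near $1$ from $h(1)>0$ by continuity, and leave the passage to arbitrary $p>1$ to the next lemma; your case (i), $m>2-\frac2N$, is unaffected since there $aq<2$ for every choice of $p_0$.
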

\begin{proof}
Case $m= 2-\frac{2}{N}$ and $C_D>\frac{C_{GN}(1+\|u_0\|_{L^1(\Omega)})}{3}(2-\frac{2}{N})^2\max\{1,\lambda_0\}\chi$: Firstly, let $p>1.$
Multiplying the first equation of \dref{1.ssderrfff1} by ${u^{p-1}}$ and using $\mu=0$, we derive that
\begin{equation}
\begin{array}{rl}
&\disp{\frac{1}{{p}}\frac{d}{dt}\|u\|^{{p}}_{L^{{p}}(\Omega)}+({p}-1)\int_{\Omega}D(u)(u+1)^{{{p}-\frac{2}{N}-1}}|\nabla u|^2}
\\
=&\disp{-\chi\int_\Omega  u^{p-1}\nabla\cdot(u
\nabla v) }\\
=&\disp{\chi(p-1)\int_\Omega   u^{p-1}
\nabla u\cdot\nabla v~~\mbox{for all}~~ t\in(0,T_{max}),}\\
\end{array}
\label{3333cz2.5114114}
\end{equation}
which combined with \dref{9162} yields to 
\begin{equation}
\begin{array}{rl}
&\disp{\frac{1}{{p}}\frac{d}{dt}\|u\|^{{{p}}}_{L^{{p}}(\Omega)}+C_{D}({p}-1)\int_{\Omega} u^{{{p}-\frac{2}{N}-1}}|\nabla u|^2}
\\
\leq&\disp{-\frac{{p}+1}{{p}}\int_{\Omega}  u^{p} +\chi(p-1)\int_\Omega   u^{p-1}
\nabla u\cdot\nabla v
   + \frac{{p}+1}{{p}}\int_\Omega  u^{p}}\\
\end{array}
\label{3333cz2.5kk1214114114}
\end{equation}
for all $t\in(0,T_{max}).$

Here, according to  the Young inequality,  it reads that
\begin{equation}
\begin{array}{rl}
&\disp{\frac{{p}+1}{{p}}\int_\Omega    u^p \leq\varepsilon_1\int_\Omega  u^{{{p}+1}} +C_1(\varepsilon_1,p),}
\end{array}
\label{3333cz2.563011228ddff}
\end{equation}
where  
$$C_1(\varepsilon_1,{p})=\frac{1}{{p}+1}\left(\varepsilon_1\frac{{p}+1}{{p}}\right)^{-p }
\left(\frac{{p}+1}{{p}}\right)^{{p}+1}|\Omega|.$$
%
%

 Once more integrating by parts, 
 we also find
that
\begin{equation}
\begin{array}{rl}
\disp{(p-1)\int_\Omega   u^{p-1}
\nabla u\cdot\nabla v \leq\chi\frac{({{p}-1})}{p}\int_\Omega   u^{p}|\Delta v| .}
\\
\end{array}
\label{3333c334444z2.563019114}
\end{equation}
On the right of \dref{3333c334444z2.563019114} we use the Young inequality to find
%
\begin{equation}
\begin{array}{rl}
&\disp{\chi\frac{({{p}-1})}{p}\int_\Omega   u^{p}|\Delta v|}
\\
\leq&\disp{\varepsilon_2\int_\Omega   u^{{p}+1}+\frac{1}{ { {p}+1}}\left[\varepsilon_2\frac{ { {p}+1}}{ p}\right]^{- p}\left(\chi\frac{({{p}-1})}{p} \right)^{ { {p}+1}}\int_\Omega |\Delta v|^{ { {p}+1}} }
\\
=&\disp{\varepsilon_2\int_\Omega   u^{{p}+1}+{A}_1\int_\Omega |\Delta v|^{ { {p}+1}} ,}
\\
\end{array}
\label{3333cz2.563019114gghh}
\end{equation}
where $\varepsilon_2=\left(B_1\lambda_0{p}\right)^{\frac{1}{{p}+1}}\chi,$
$$A_1:=\frac{1}{ { {p}+1}}\left[\varepsilon_2\frac{ { {p}+1}}{ p}\right]^{- p}\left(\chi\frac{({{p}-1})}{p} \right)^{ { {p}+1}}$$
and
$B_1$ is the same as \dref{2222zjscz2.5297x9630222211444125}.
Hence \dref{3333cz2.5kk1214114114}, \dref{3333cz2.563011228ddff} and \dref{3333cz2.563019114gghh} 
results in
\begin{equation}\label{3223444333c334444z2.563019114}
\begin{array}{rl}
&\disp\frac{1}{{p}}\disp\frac{d}{dt}\|u\|^{{{p}}}_{L^{{p}}(\Omega)}+\frac{4C_{D}(p-1)}{(1-\frac{2}{N}+p)^2}\|\nabla    u^{\frac{1-\frac{2}{N}+p}{2}}\|_{L^2(\Omega)}^{2}\\
\leq&\disp{(\varepsilon_1+\varepsilon_2)\int_\Omega   u^{{{p}+1}} -\frac{{p}+1}{{p}}\int_{\Omega}  u^{p} }\\
&+\disp{{A}_1\int_\Omega |\Delta v|^{ { {p}+1}} +
C_1(\varepsilon_1,{p})~~\mbox{for all}~~ t\in(0,T_{max}).}\\
\end{array}
\end{equation}
On the other hand, by the Gagliardo--Nirenberg inequality and \dref{ssddaqwswddaassffssff3.ddfvbb10deerfgghhjuuloollgghhhyhh}, one can get there exists a positive constant  $C_{GN}$ such that
\begin{equation}
\begin{array}{rl}
&\disp\int_{\Omega} u^{p+1}\\
=&\disp{\|  u^{\frac{1-\frac{2}{N}+p}{2}}\|^{\frac{2(p+1)}{1-\frac{2}{N}+p}}_{L^{\frac{2(p+1)}{1-\frac{2}{N}+p}}(\Omega)}}\\
\leq&\disp{C_{GN}(\|\nabla    u^{\frac{1-\frac{2}{N}+p}{2}}\|_{L^2(\Omega)}^{\frac{1-\frac{2}{N}+p}{p+1}}\|   u^{\frac{1-\frac{2}{N}+p}{2}}\|_{L^\frac{2}{1-\frac{2}{N}+p }(\Omega)}^{1-\frac{1-\frac{2}{N}+p}{p+1}}+\|   u^{\frac{1-\frac{2}{N}+p}{2}}\|_{L^\frac{2}{1-\frac{2}{N}+p }(\Omega)})^{\frac{2(p+1)}{1-\frac{2}{N}+p}}}\\
\leq&\disp{C_{GN}(1+\|u_0\|_{L^1(\Omega)})(\|\nabla    u^{\frac{1-\frac{2}{N}+p}{2}}\|_{L^2(\Omega)}^{2}+1),}\\
\end{array}
\label{123cz2.57151hhddfffkkhhhjddffffgukildrftjj}
\end{equation}
where $C_{GN}$ is the same as Lemma \ref{lemma41}.
In combination with \dref{3223444333c334444z2.563019114} and \dref{123cz2.57151hhddfffkkhhhjddffffgukildrftjj}, this shows that
\begin{equation}\label{3223444333c334444zsddfff2.563019114}
\begin{array}{rl}
\disp\frac{1}{{p}}\disp\frac{d}{dt}\|u\|^{{{p}}}_{L^{{p}}(\Omega)}\leq&\disp{(\varepsilon_1+\varepsilon_2-\frac{4C_{D}(p-1)}{(1-\frac{2}{N}+p)^2}\frac{1}{C_{GN}(1+\|u_0\|_{L^1(\Omega)})})\int_\Omega  u^{{{p}+1}} -\frac{{p}+1}{{p}}\int_{\Omega}  u^{p} }\\
&+\disp{{A}_1\int_\Omega |\Delta v|^{ { {p}+1}} +
C_2(\varepsilon_1,{p})~~\mbox{for all}~~ t\in(0,T_{max}),}\\
\end{array}
\end{equation}
where $$C_2(\varepsilon_1,{p})=C_1(\varepsilon_1,{p})+\frac{4C_{D}(p-1)}{(1-\frac{2}{N}+p)^2}.$$
Employing the variation-of-constants formula to \dref{3223444333c334444zsddfff2.563019114}, we obtain
\begin{equation}
\begin{array}{rl}
&\disp{\frac{1}{{p}}\|u(\cdot,t) \|^{{{p}}}_{L^{{p}}(\Omega)}}
\\
\leq&\disp{\frac{1}{{p}}e^{-({p}+1)t}\|u_0 \|^{{{p}}}_{L^{{p}}(\Omega)}+(\varepsilon_1+\varepsilon_2-\frac{4C_{D}(p-1)}{(1-\frac{2}{N}+p)^2}\frac{1}{C_{GN}(1+\|u_0\|_{L^1(\Omega)})})\int_{0}^t
e^{-({p}+1)(t-s)}\int_\Omega  u^{{{p}+1}} ds}\\
&+\disp{{A}_1\int_{0}^t
e^{-({p}+1)(t-s)}\int_\Omega |\Delta v|^{ {p}+1} dxds+ C_2(\varepsilon_1,{p})\int_{0}^t
e^{-({p}+1)(t-s)}ds}\\
\leq&\disp{(\varepsilon_1+\varepsilon_2-\frac{4C_{D}(p-1)}{(1-\frac{2}{N}+p)^2}\frac{1}{C_{GN}(1+\|u_0\|_{L^1(\Omega)}})   \int_{0}^t
e^{-({p}+1)(t-s)}\int_\Omega  u^{{{p}+1}} ds}\\
&+\disp{{A}_1\int_{0}^t
e^{-({p}+1)(t-s)}\int_\Omega |\Delta v|^{ {p}+1} dxds+C_3(\varepsilon_1,{p})}\\
\end{array}
\label{3333cz2.5kk1214114114rrgg}
\end{equation}
with
$$
\begin{array}{rl}
C_3:=C_3(\varepsilon_1,{p})=&\disp\frac{1}{{p}}e^{-({p}+1)t}\|u_0\|^{{{p}}}_{L^{{p}}(\Omega)}+
 C_2(\varepsilon_1,{p})\int_{0}^t
e^{-({p}+1)(t-s)}ds.\\
\end{array}
$$
Now, due to
Lemma  \ref{lemma45xy1222232} 
and the second equation of \dref{1.ssderrfff1} and using the H\"{o}lder inequality, we have
\begin{equation}\label{3333cz2.5kke34567789999001214114114rrggjjkk}
\begin{array}{rl}
&\disp{{A}_1\int_{0}^t
e^{-({p}+1)(t-s)}\int_\Omega |\Delta v|^{ {p}+1} ds}
\\
=&\disp{{A}_1e^{-({p}+1)t}\int_{0}^t
e^{({p}+1)s}\int_\Omega |\Delta v|^{ {p}+1} ds}\\
\leq&\disp{{A}_1e^{-({p}+1)t}\lambda_0\left[\int_{0}^t
\int_\Omega e^{({p}+1)s} u^ {{p}+1 } ds+\|v_0\|^ {{p}+1 }_{W^{2,  {{p}+1 }}}\right]}\\
\leq&\disp{{A}_1e^{-({p}+1)t}\lambda_0\int_{0}^t
 e^{({p}+1)s}u^ {{p}+1 }ds+C_4}\\
\end{array}
\end{equation}
for all $t\in(0, T_{max})$,
where  $C_4={A}_1e^{-({p}+1)t}\lambda_0\|v_0\|^ {{p}+1 }_{W^{2,  {{p}+1 }}}.$
Recalling  \dref{3333cz2.5kk1214114114rrgg}, applying Lemma \ref{lemma45630223116} and the Young inequality, we derive that
\begin{equation}
\begin{array}{rl}
&\disp{\frac{1}{{p}}\|u(\cdot,t)\|^{{{p}}}_{L^{{p}}(\Omega)}}
\\
\leq&\disp{(\varepsilon_1+\varepsilon_2-\frac{4C_{D}(p-1)}{(1-\frac{2}{N}+p)^2}\frac{1}{C_{GN}(1+\|u_0\|_{L^1(\Omega)})})\int_{0}^t
e^{-({p}+1)(t-s)}\int_\Omega  u^{{{p}+1}} ds}\\
&\disp{+{A}_1\lambda_0 \int_{0}^t
e^{-({p}+1)(t-s)}\int_\Omega u^{{p}+1} ds+C_{5}}\\
\leq&\disp{(\varepsilon_1+\varepsilon_2+{A}_1\lambda_0-\frac{4C_{D}(p-1)}{(1-\frac{2}{N}+p)^2}\frac{1}{C_{GN}(1+\|u_0\|_{L^1(\Omega)})}) \int_{0}^t
e^{-({p}+1)(t-s)}\int_\Omega  u^{{{p}+1}} ds+C_{5}}\\
\end{array}
\label{3333cz2.5kk121fttyuiii4114114rrgg}
\end{equation}
with $C_{5}=C_{4}+C_3(\varepsilon_1,{p})$.
Observing that
$$\begin{array}{rl}
\varepsilon_2+{A}_1\lambda_0&\disp{=\left(B_1\lambda_0{p}\right)^{\frac{1}{{p}+1}}\chi+\left[\left(B_1\lambda_0{p}\right)^{\frac{1}{{p}+1}}\chi\right]^{-p}\frac{1}{ { {p}+1}}\left[\frac{ { {p}+1}}{ p}\right]^{- p}\left(\chi\frac{({{p}-1})}{p} \right)^{ { {p}+1}},}\\
\end{array}$$
so that, with the help of Lemma \ref{lemma222245630223116}, we derive that
$$\begin{array}{rl}
\varepsilon_2+{A}_1\lambda_0=&\disp{\frac{({p}-1)}{{p}}\lambda_0^{\frac{1}{{p}+1}}\chi}\\
\leq&\disp{\frac{({p}-1)}{{p}}\max\{1,\lambda_0\}\chi,}\\
\end{array}
$$
thus, by \dref{ddffffgddffffnjjmmx1.731426677gg}, we can choose $\varepsilon_1$ small enough
    in \dref{3333cz2.5kk121fttyuiii4114114rrgg}, using  the H\"{o}lder inequality, we derive that there exits a positive constant $p_0>1$ such that 
  \begin{equation}
\int_{\Omega}u^{p_0}(x,t)dx\leq C_{6} ~~~\mbox{for all}~~ t\in(0,T_{max}).
\label{334444zjscz2.5297dfggggx96302222114}
\end{equation}
Case $m>2-\frac{2}{N}$ can be proved very similarly, therefore, we omit it.
The proof of Lemma \ref{lemma45566645630223} is completed.
\end{proof}

\begin{lemma}\label{lemmaddffddfffrsedrffffffgg}
Suppose that the conditions of Lemma \ref{lemma45566645630223} hold.
Then for any $p>1,$ there exists a positive constant $C:=C(p,|\Omega|,C_D,\lambda_0,m,\chi)$ such that 
\begin{equation}
 \begin{array}{rl}
 \|u(\cdot,t)\|_{L^p(\Omega)}\leq C~~~\mbox{for all}~~t\in (0, T_{max}).
\end{array}\label{cz2sfffffedfgg.5g5gghh56789hhjui7ssddd8jj90099}
\end{equation}
\end{lemma}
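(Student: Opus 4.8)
The plan is to bootstrap the baseline estimate supplied by Lemma \ref{lemma45566645630223}, which already furnishes some $p_0>1$ with $\int_\Omega u^{p_0}(\cdot,t)\leq C$ uniformly in $t\in(0,T_{max})$. For exponents $p\leq p_0$ the asserted bound is immediate from H\"older's inequality on the bounded domain $\Omega$, so it suffices to treat $p>\max\{p_0,N-1\}$, the restriction $p+1>N$ being needed so that the maximal Sobolev regularity estimate of Lemma \ref{lemma45xy1222232} applies with $\gamma=p+1$. The argument then runs exactly parallel to the proof of Lemma \ref{lemmasdffssdd45566645630223}, with the role of the low-level estimate \dref{cz2.5kk1214114114rrggkklljjuu} now played by the $L^{p_0}$ bound and with all logistic contributions dropped, since $\mu=0$.

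Concretely, I would test the first equation of \dref{1.ssderrfff1} against $u^{p-1}$ and use \dref{9162} to obtain
$$\frac{1}{p}\frac{d}{dt}\|u\|_{L^p(\Omega)}^p+\frac{4C_D(p-1)}{(m+p-1)^2}\|\nabla u^{\frac{m+p-1}{2}}\|_{L^2(\Omega)}^2\leq \chi(p-1)\int_\Omega u^{p-1}\nabla u\cdot\nabla v.$$
Integrating the chemotaxis term by parts yields $\frac{\chi(p-1)}{p}\int_\Omega u^p|\Delta v|$, and Young's inequality splits this into a multiple of $\int_\Omega u^{p+1}$ plus a multiple of $\int_\Omega|\Delta v|^{p+1}$, exactly as in \dref{3333ddfffcz2.563019114gghh}. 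After adding $\frac{p+1}{p}\int_\Omega u^p$ to manufacture a dissipative linear term and applying the variation-of-constants formula, Lemma \ref{lemma45xy1222232} lets me control $\int_0^t e^{(p+1)s}\int_\Omega|\Delta v|^{p+1}$ by $\lambda_0\int_0^t e^{(p+1)s}\int_\Omega u^{p+1}$ up to initial-data terms, so that the entire right-hand side is reduced to a uniform-in-time bound on $\int_\Omega u^{p+1}$.

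The decisive step is to control $\int_\Omega u^{p+1}$ via the Gagliardo--Nirenberg inequality (Lemma \ref{lemma41}) applied to $u^{\frac{m+p-1}{2}}$, with the low-order Lebesgue exponent chosen to be $\frac{2p_0}{m+p-1}$, so that the corresponding norm equals $\|u\|_{L^{p_0}(\Omega)}^{\frac{m+p-1}{2}}$ and stays bounded. The resulting interpolation is admissible with exponent strictly below $1$ precisely when $p+1<m+p-1+\frac{2}{N}p_0$, i.e.\ when $m>2-\frac{2}{N}p_0$; this is where the hypotheses enter, since $p_0>1$ together with $m\geq 2-\frac{2}{N}$ gives $2-\frac{2}{N}p_0<2-\frac{2}{N}\leq m$. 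With the interpolation exponent below $1$, Young's inequality absorbs $\|\nabla u^{\frac{m+p-1}{2}}\|_{L^2(\Omega)}^2$ into the dissipation term, and choosing the free small parameter appropriately delivers $\int_\Omega u^{p+1}\leq C$, hence the claimed bound for this $p$, the remaining exponents following by H\"older. I expect the only delicate point to be verifying that this interpolation exponent remains $<1$, which is exactly what the strict inequality $p_0>1$ (rather than the mere $L^1$ mass bound) secures, and which is the very reason Lemma \ref{lemma45566645630223} had to be established beforehand.
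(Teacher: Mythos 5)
Your proposal is correct and follows essentially the same route as the paper's own proof: testing against $u^{p-1}$, absorbing the chemotaxis term via Young's inequality into $\int_\Omega u^{p+1}$ plus $\int_\Omega|\Delta v|^{p+1}$, invoking the variation-of-constants formula together with the maximal Sobolev regularity of Lemma \ref{lemma45xy1222232}, and then closing the estimate by Gagliardo--Nirenberg with low-order norm $\|u^{\frac{m+p-1}{2}}\|_{L^{2p_0/(m+p-1)}(\Omega)}$, where the strict inequality $p_0>1$ from Lemma \ref{lemma45566645630223} guarantees the interpolation power $2\frac{N(p+1)-Np_0}{(2-N)p_0+N(m+p-1)}<2$ even in the critical case $m=2-\frac{2}{N}$. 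You also correctly identified this last point as the reason Lemma \ref{lemma45566645630223} must precede this lemma; the only (immaterial) deviation is your lower threshold $p>\max\{p_0,N-1\}$ versus the paper's $p>\max\{N+1,N(m+1),p_0-1,1,1-m+\frac{N-2}{N}p_0\}$.
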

\begin{proof}
Firstly, let $p>\max\{N+1,N(m+1),p_0-1,1,1-m+\frac{N-2}{N}p_0\}$, where $p_0>1$  is the same as Lemma \ref{lemma45566645630223}.
Testing the first equation of \dref{1.ssderrfff1} against ${u^{p-1}}$, using $\mu=0$ and the Young inequality yields
\begin{equation}
\begin{array}{rl}
&\disp{\frac{1}{{p}}\frac{d}{dt}\|u\|^{{p}}_{L^{{p}}(\Omega)}+C_{D}({p}-1)\int_{\Omega}(u+1)^{{{m}+{p}-3}}|\nabla u|^2}
+\frac{{p}+1}{{p}}\int_\Omega  u^{p}\\
\leq&\disp{-\chi\int_\Omega  u^{p-1}\nabla\cdot(u
\nabla v) + \frac{{p}+1}{{p}}\int_\Omega  u^{p} }\\
=&\disp{\chi(p-1)\int_\Omega   u^{p-1}
\nabla u\cdot\nabla v+ \frac{{p}+1}{{p}}\int_\Omega  u^{p}}\\
\leq&\disp{\chi\frac{({{p}-1})}{p}\int_\Omega   u^{p}|\Delta v| + \frac{{p}+1}{{p}}\int_\Omega  u^{p}}\\
\leq&\disp{2\int_\Omega  u^{{{p}+1}}+C_1\int_\Omega |\Delta v|^{ { {p}+1}}+C_2~~\mbox{for all}~~t\in (0, T_{max}),}\\
\end{array}
\label{234443333csddffffffz2.5114114}
\end{equation}
where $C_1=\frac{1}{ { {p}+1}}\left[\frac{ { {p}+1}}{ p}\right]^{- p}\left(\chi\frac{({{p}-1})}{p} \right)^{ { {p}+1}}$ and $C_2=\frac{1}{{p}+1}\left(\frac{{p}+1}{{p}}\right)^{-p }
\left(\frac{{p}+1}{{p}}\right)^{{p}+1}|\Omega|.$
Employing the variation-of-constants formula to \dref{234443333csddffffffz2.5114114}, we obtain
\begin{equation}
\begin{array}{rl}
&\disp{\frac{1}{{p}}\|u(\cdot,t) \|^{{{p}}}_{L^{{p}}(\Omega)}}
\\
\leq&\disp{\frac{1}{{p}}e^{-({p}+1)t}\|u_0(\cdot,)\|^{{{p}}}_{L^{{p}}(\Omega)}-\frac{4C_{D}(p-1)}{(m+p-1)^2}\int_{0}^t
e^{-({p}+1)(t-s)}\|\nabla    u^{\frac{m+p-1}{2}}\|_{L^2(\Omega)}^{2}ds}\\
&\disp{+2\int_{0}^t
e^{-({p}+1)(t-s)}\int_\Omega  u^{{{p}+1}} ds}\\
&+\disp{{C}_1\int_{0}^t
e^{-({p}+1)(t-s)}\int_\Omega |\Delta v|^{ {p}+1} dxds+ C_2({p})\int_{0}^t
e^{-({p}+1)(t-s)}ds}\\
\leq&\disp{2 \int_{0}^t
e^{-({p}+1)(t-s)}\int_\Omega  u^{{{p}+1}} ds-\frac{4C_{D}(p-1)}{(m+p-1)^2}\int_{0}^t
e^{-({p}+1)(t-s)}\|\nabla    u^{\frac{m+p-1}{2}}\|_{L^2(\Omega)}^{2}ds}\\
&+\disp{{C}_1\int_{0}^t
e^{-({p}+1)(t-s)}\int_\Omega |\Delta v|^{ {p}+1} dxds+C_3({p})}\\
\end{array}
\label{3333cffggghz2.5kk12sdffffdfff14114114rrgg}
\end{equation}
with
$$
\begin{array}{rl}
C_3:=C_3({p})=&\disp\frac{1}{{p}}e^{-({p}+1)t}\|u_0(\cdot,)\|^{{{p}}}_{L^{{p}}(\Omega)}+
 C_2({p})\int_{0}^t
e^{-({p}+1)(t-s)}ds.\\
\end{array}
$$
Now, we use
Lemma  \ref{lemma45xy1222232}, 
 the second equation of \dref{1.ssderrfff1} and the H\"{o}lder inequality to find
\begin{equation}\label{33dfsdddffgg33cz2.5kke34567789999001214114114rrggjjkk}
\begin{array}{rl}
&\disp{{C}_1\int_{0}^t
e^{-({p}+1)(t-s)}\int_\Omega |\Delta v|^{ {p}+1} ds}
\\
=&\disp{{C}_1e^{-({p}+1)t}\int_{0}^t
e^{({p}+1)s}\int_\Omega |\Delta v|^{ {p}+1} ds}\\
\leq&\disp{{C}_1e^{-({p}+1)t}\lambda_0\left[\int_{0}^t
\int_\Omega e^{({p}+1)s} u^ {{p}+1 } ds+\|v_0\|^ {{p}+1 }_{W^{2,  {{p}+1 }}}\right]}\\
\leq&\disp{{C}_1e^{-({p}+1)t}\lambda_0\int_{0}^t
 e^{({p}+1)s}u^ {{p}+1 }ds+C_4}\\
\end{array}
\end{equation}
for all $t\in(0, T_{max})$,
where  $C_4={C}_1e^{-({p}+1)t}\lambda_0\|v_0\|^ {{p}+1 }_{W^{2,  {{p}+1 }}}.$
Hence \dref{3333cffggghz2.5kk12sdffffdfff14114114rrgg} and \dref{33dfsdddffgg33cz2.5kke34567789999001214114114rrggjjkk} 
results in
\begin{equation}
\begin{array}{rl}
\disp{\frac{1}{{p}}\|u(\cdot,t) \|^{{{p}}}_{L^{{p}}(\Omega)}}\leq&\disp{(2+{C}_1\lambda_0) \int_{0}^t
e^{-({p}+1)(t-s)}\int_\Omega  u^{{{p}+1}} ds}\\
&\disp{-\frac{4C_{D}(p-1)}{(m+p-1)^2}\int_{0}^t
e^{-({p}+1)(t-s)}\|\nabla    u^{\frac{m+p-1}{2}}\|_{L^2(\Omega)}^{2}ds+C_2+C_3.}\\
\end{array}
\label{3333cffggghzddfggg2.5kk1214114sdfggh114rrgg}
\end{equation}
for all $t\in(0, T_{max})$.
Therefore, observe that  $m\geq2-\frac{2}{N}$ and $p_0>1$ yields to $p+1<m+p-1+\frac{2}{N}p_0,$
so that, in view of   the Gagliardo--Nirenberg inequality, \dref{33sdfffff4444zjscz2.5297x96302222114} and using the Young inequality,  
one can get there exist positive constants  $C_4,C_5$ and
$C_6$ 
such that for any $\delta_1>0$
\begin{equation}
\begin{array}{rl}
&\disp(2+{C}_1\lambda_0)\int_{\Omega} u^{p+1}\\
=&\disp{(2+{C}_1\lambda_0)\|  u^{\frac{m+p-1}{2}}\|^{\frac{2(p+1)}{m+p-1}}_{L^{\frac{2(p+1)}{m+p-1}}(\Omega)}}\\
\leq&\disp{C_{4}(\|\nabla    u^{\frac{m+p-1}{2}}\|_{L^2(\Omega)}^{\frac{m+p-1}{p+1}}\|   u^{\frac{m+p-1}{2}}\|_{L^\frac{2p_0}{m+p-1 }(\Omega)}^{1-\frac{m+p-1}{p+1}}+\|   u^{\frac{m+p-1}{2}}\|_{L^\frac{2p_0}{m+p-1 }(\Omega)})^{\frac{2(p+1)}{m+p-1}}}\\
\leq&\disp{C_{5}(\|\nabla    u^{\frac{m+p-1}{2}}\|_{L^{2}(\Omega)}^{2\frac{N(p+1)-Np_0}{(2-N)p_0+N(m+p-1)}}+1)}\\
\leq&\disp{\delta_1\|\nabla    u^{\frac{m+p-1}{2}}\|_{L^{2}(\Omega)}^{2}+C_6,}\\
\end{array}
\label{123cz2.57151hhdhhjjjdfffkkhhhjddffffgukildrftjj}
\end{equation}
where we have used that $\frac{N(p+1)-Np_0}{(2-N)p_0+N(m+p-1)}<1$ by $m\geq2-\frac{2}{N}$ and $p_0>1$.
Inserting \dref{123cz2.57151hhdhhjjjdfffkkhhhjddffffgukildrftjj} into \dref{3333cffggghzddfggg2.5kk1214114sdfggh114rrgg},  choosing $\delta_1$ appropriately small and using the H\"{o}lder inequality, we can get \dref{cz2sfffffedfgg.5g5gghh56789hhjui7ssddd8jj90099}.
\end{proof}

Now, in light of Lemmas \ref{lemmasdffssdd45566645630223}, \ref{lemmaddffddfffrsedrffffffgg} and  \ref{lemma70},
we shall prove the global boundedness of solutions for \dref{1.ssderrfff1},  by using
the well-known Moser-Alikakos iteration and the standard
semigroup arguments. To this end, we first prove the following Lemma:

\begin{lemma}\label{lemmaddfffrsedrffffffgg}
Suppose that the conditions of Theorem  \ref{theorem3} hold.
Let $T\in (0, T_{max})$ and
$(u, v)$ be the solution of \dref{1.ssderrfff1}.  Then there exists a constant $C > 0$ independent of $T$ such that the
component $v$ of $(u, v)$ satisfies
\begin{equation}
 \begin{array}{rl}
 \|\nabla v(\cdot,t)\|_{L^\infty(\Omega)}\leq C~~~\mbox{for all}~~t\in (0, T).
\end{array}\label{cz2sedfgg.5g5gghh56789hhjui7ssddd8jj90099}
\end{equation}
\end{lemma}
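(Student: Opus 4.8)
The plan is to represent $v$ through the variation-of-constants formula associated with the operator $-\Delta+1$ under Neumann boundary conditions, and then to exploit the smoothing action of the corresponding semigroup together with the $L^p$-bounds on $u$ already secured in Lemmas \ref{lemmasdffssdd45566645630223} and \ref{lemmaddffddfffrsedrffffffgg}. Since the hypotheses of Theorem \ref{theorem3} cover both the case $\mu>0$ (where Lemma \ref{lemmasdffssdd45566645630223} gives an $L^p$ bound for every $p>\max\{N+1,N(m+1)\}$) and the case $\mu=0$ (where Lemma \ref{lemmaddffddfffrsedrffffffgg} gives an $L^p$ bound for every $p>1$), I may first fix some exponent $q>N$ and obtain a constant $C_0>0$, independent of $T$, such that $\|u(\cdot,t)\|_{L^q(\Omega)}\leq C_0$ for all $t\in(0,T)$. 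Writing $(e^{t(\Delta-1)})_{t\geq0}$ for the Neumann heat semigroup generated by $\Delta-1$, the second equation of \dref{1.ssderrfff1} then yields
\begin{equation*}
v(\cdot,t)=e^{t(\Delta-1)}v_0+\int_0^t e^{(t-s)(\Delta-1)}u(\cdot,s)\,ds\quad\mbox{for all}~~t\in(0,T).
\end{equation*}

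Next I would apply $\nabla$ to this identity and estimate the two contributions separately. For the first term, the elementary bound $\|\nabla e^{t(\Delta-1)}v_0\|_{L^\infty(\Omega)}\leq C\|\nabla v_0\|_{L^\infty(\Omega)}$, valid uniformly in $t$, together with $v_0\in W^{1,\infty}(\bar\Omega)$, furnishes a time-independent control. For the second term, the decisive tool is the standard $L^q$--$L^\infty$ smoothing estimate for the Neumann heat semigroup: there are constants $C_1>0$ and $\lambda>0$ (the strict positivity of $\lambda$ being guaranteed by the zeroth-order shift $-1$) such that
\begin{equation*}
\|\nabla e^{\sigma(\Delta-1)}w\|_{L^\infty(\Omega)}\leq C_1\left(1+\sigma^{-\frac{1}{2}-\frac{N}{2q}}\right)e^{-\lambda\sigma}\|w\|_{L^q(\Omega)}\quad\mbox{for all}~~\sigma>0.
\end{equation*}
Inserting this with $w=u(\cdot,s)$ and $\sigma=t-s$ and invoking $\|u(\cdot,s)\|_{L^q(\Omega)}\leq C_0$, I would arrive at
\begin{equation*}
\left\|\nabla\int_0^t e^{(t-s)(\Delta-1)}u(\cdot,s)\,ds\right\|_{L^\infty(\Omega)}\leq C_1C_0\int_0^t\left(1+(t-s)^{-\frac{1}{2}-\frac{N}{2q}}\right)e^{-\lambda(t-s)}\,ds.
\end{equation*}

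The crux of the argument is to ensure that this time integral is finite uniformly in $t$, and this is precisely where the choice $q>N$ enters: it guarantees $\frac{1}{2}+\frac{N}{2q}<1$, so that the singularity of $(t-s)^{-\frac{1}{2}-\frac{N}{2q}}$ at $s=t$ is integrable, while the exponential factor $e^{-\lambda(t-s)}$ tames the behaviour for large $t-s$. Hence, after the substitution $\sigma=t-s$, the integral is dominated by $\int_0^\infty(1+\sigma^{-\frac{1}{2}-\frac{N}{2q}})e^{-\lambda\sigma}\,d\sigma$, a finite constant independent of $t$ and therefore of $T$. Combining the two estimates then yields \dref{cz2sedfgg.5g5gghh56789hhjui7ssddd8jj90099}. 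I expect the only delicate point to be the bookkeeping that verifies that an admissible exponent $q>N$ is indeed produced by the earlier lemmas in every case of Theorem \ref{theorem3}; once that is granted, the remaining steps are entirely routine.
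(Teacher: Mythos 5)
Your proposal is correct and follows essentially the same route as the paper: both fix an exponent $q>N$ supplied by the earlier $L^p$ lemmas (covering the $\mu>0$ and $\mu=0$ cases), represent $v$ by the variation-of-constants formula, and invoke the $L^q$--$L^\infty$ gradient smoothing estimates of the Neumann heat semigroup, with $q>N$ ensuring integrability of the singularity $(t-s)^{-\frac{1}{2}-\frac{N}{2q}}$. If anything, your treatment of the initial-data term via $\|\nabla e^{t(\Delta-1)}v_0\|_{L^\infty(\Omega)}\leq C\|\nabla v_0\|_{L^\infty(\Omega)}$ is slightly cleaner than the paper's bound $C_2t^{-\alpha}\|v_0\|_{L^\infty(\Omega)}$, which is singular as $t\to 0$.
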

\begin{proof}
Firstly, due to Lemmas \ref{lemmasdffssdd45566645630223}--\ref{lemma45566645630223}, we derive that
there exist positive constants $p_0>N$ and $C_1$ such that
$$
\begin{array}{rl}
\|u(\cdot, t)\|_{L^{{p}_0}(\Omega)}\leq C_{1} ~~ \mbox{for all}~~t\in(0,T_{max}). \\
\end{array}
$$
Next, for any $t\in(0,T),$ in view of \dref{eqx45xx12112},
 recalling well-known smoothing properties of the Neumann heat semigroup, we find  $C_2$ and $C_3 > 0$ such that
\begin{equation}
\begin{array}{rl}
&\disp{\|\nabla v(\cdot,t)\|_{L^\infty(\Omega)}}\\
\leq&\disp{C_2\int_{0}^{t}(t-s)^{-\alpha-\frac{N}{2p_0}}e^{-\gamma(t-s)}\|u(\cdot,s)\|_{L^{p_0}(\Omega)}ds+
C_2t^{-\alpha}\|v_0(\cdot,)\|_{L^\infty(\Omega)}}\\
\leq&\disp{C_2\int_{0}^{+\infty}\sigma^{-\alpha-\frac{N}{2p_0}}e^{-\gamma\sigma}d\sigma
+C_2t^{-\alpha}K}\\
\leq&\disp{C_3~~\mbox{for all}~~ t\in(0, T).}\\
\end{array}
\label{11111gnhmkfghbnmcz2.5ghju48cfg924ghyujiffggg}
\end{equation}
\end{proof}
\begin{lemma}\label{lemmasedrffffffgg}
Assume
that $u_0\in C^0(\bar{\Omega})$ and $v_0\in W^{1,\infty}(\bar{\Omega})$ both are nonnegative.
If
  $m>2-\frac{2}{N}\frac{\chi\max\{1,\lambda_0\}}{[\chi\max\{1,\lambda_0\}-\mu]_{+}}$ or $m=2-\frac{2}{N}$ and
  $C_D>\frac{C_{GN}(1+\|u_0\|_{L^1(\Omega)})}{3}(2-\frac{2}{N})^2\max\{1,\lambda_0\}\chi$,
then there exists $C > 0$ such that for every $T\in(0, T_{max})$
\begin{equation}
 \begin{array}{rl}
 \|u(\cdot,t)\|_{L^{\infty}(\Omega)}\leq C~~~\mbox{for all}~~t\in (0, T).
\end{array}\label{ssddaqwddfffhhhhkkswddaassffssff3.ddfvbb10deerfgghhjuuloollgghhhyhh}
\end{equation}
\end{lemma}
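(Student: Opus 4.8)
The plan is to promote the uniform $L^p$ bounds already at hand to an $L^\infty$ bound by a Moser--Alikakos iteration, using the gradient estimate for $v$ to control the chemotactic coupling. First I would record the two inputs. By Lemma \ref{lemmasdffssdd45566645630223} (in the case $\mu>0$) or Lemma \ref{lemmaddffddfffrsedrffffffgg} (in the case $\mu=0$), the solution satisfies $\|u(\cdot,t)\|_{L^p(\Omega)}\le C(p)$ for every finite $p>1$, uniformly in $t\in(0,T_{max})$; and by Lemma \ref{lemmaddfffrsedrffffffgg} the second component obeys $\|\nabla v(\cdot,t)\|_{L^\infty(\Omega)}\le C$, again uniformly in $t$. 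These replace the $|\Delta v|$-type quantities used in the earlier lemmas by a genuinely pointwise-in-time bound on $\nabla v$, which is what makes the final iteration self-contained.

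Next I would test the first equation of \dref{1.ssderrfff1} against $u^{p-1}$ for a sequence of exponents $p=p_k\to\infty$. As in \dref{11113223444333c334444z2.563019114}, the diffusion term yields the dissipation $\frac{4C_D(p-1)}{(m+p-1)^2}\|\nabla u^{(m+p-1)/2}\|_{L^2(\Omega)}^2$; the chemotactic term, after writing $u^{p-1}\nabla u=\frac{2}{m+p-1}u^{(p+1-m)/2}\nabla u^{(m+p-1)/2}$ and invoking $\|\nabla v\|_{L^\infty}\le C$ together with Young's inequality, is bounded by $\tfrac12$ of the dissipation plus a constant multiple of $\int_\Omega u^{p+1-m}$; and the logistic contribution $\mu\int_\Omega u^{p-1}(u-u^2)=\mu\int_\Omega u^{p}-\mu\int_\Omega u^{p+1}$ supplies the sign-favourable term $-\mu\int_\Omega u^{p+1}$ (which only helps when $\mu>0$ and is absent when $\mu=0$) together with the harmless $\mu\int_\Omega u^p$. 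Setting $w=u^{(m+p-1)/2}$, this produces a differential inequality for $\int_\Omega w^2$ whose right-hand side involves only lower powers of $w$, which I would then interpolate by the Gagliardo--Nirenberg inequality of Lemma \ref{lemma41} against $\|\nabla w\|_{L^2}^2$ and a fixed lower-order norm that is uniformly bounded by the inputs above.

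Absorbing the gradient into the dissipation and integrating in time yields the standard recursion $M_k\le (a\,b^{\,k})^{\theta_k}M_{k-1}^{\beta_k}$ for $M_k:=\sup_{t\in(0,T)}\int_\Omega u^{p_k}$, from which $\limsup_{k\to\infty}M_k^{1/p_k}<\infty$ gives \dref{ssddaqwddfffhhhhkkswddaassffssff3.ddfvbb10deerfgghhjuuloollgghhhyhh}. The step I expect to be the main obstacle is keeping the $p$-dependence of all constants under control so that the recursion actually closes: the diffusion coefficient $\frac{4C_D(p-1)}{(m+p-1)^2}$ decays like $p^{-1}$, the interpolation exponent $\theta_k=\theta_k(m,N,p_k)$ drifts with $p_k$, and one must check that $\sum_k\theta_k\log(b^{\,k})$ converges while $\prod_k\beta_k$ stays bounded; for $m\ge 2-\frac2N$ and $p_k$ along a geometric sequence these exponents stabilise as $k\to\infty$, so the resulting bound is uniform in $T$. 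Since this is a by-now routine scheme, I would alternatively simply verify the hypotheses of the Moser-type iteration lemma (Lemma A.1 of \cite{Tao794}) and quote its conclusion, with the constant $C$ depending on the data only through a uniform $L^{p_0}(\Omega)$ bound for $u$ and the $L^\infty(\Omega)$ bound for $\nabla v$, and hence independent of $T$.
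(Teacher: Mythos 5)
Your proposal is correct and follows essentially the same route as the paper: the paper likewise gathers the uniform $L^p$ bounds from Lemmas \ref{lemmasdffssdd45566645630223} and \ref{lemmaddffddfffrsedrffffffgg} together with the $L^\infty$ bound on $\nabla v$ from Lemma \ref{lemmaddfffrsedrffffffgg}, and then concludes by the Moser--Alikakos iteration (Lemma A.1 of \cite{Tao794}) applied to the first equation of \dref{1.ssderrfff1}. The iteration details you sketch are precisely what that cited lemma encapsulates, so your alternative of verifying its hypotheses and quoting its conclusion is exactly the paper's argument.
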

\begin{proof}
With the regularity properties from Lemmas \ref{lemmasdffssdd45566645630223}--\ref{lemmaddffddfffrsedrffffffgg},  \ref{lemmaddfffrsedrffffffgg} and  at hand, one can readily
derive \dref{ssddaqwddfffhhhhkkswddaassffssff3.ddfvbb10deerfgghhjuuloollgghhhyhh} by means of the well-known Moser-Alikakos iteration (see e.g. Lemma A.1 of \cite{Tao794}) applied to the first equation in
\dref{1.ssderrfff1}.
\end{proof}
With the above estimate as hand (see Lemma \ref{lemmasedrffffffgg}), now  we can immediately pass to our main result.

{\bf The proof of Theorem \ref{theorem3}}~

The statement of global classical solvability and boundedness is a straightforward consequence of
Lemma \ref{lemmasedrffffffgg} and and the
extendibility criterion provided by Lemma  \ref{lemma70}.
 Hence the  classical solution $(u,v)$ of \dref{1.ssderrfff1} is global in time and bounded.

{\bf Acknowledgement}:
This work is partially supported by  the National Natural
Science Foundation of China (No. 11601215), Shandong Provincial
Science Foundation for Outstanding Youth (No. ZR2018JL005),
the
Natural Science Foundation of Shandong Province of China (No. ZR2016AQ17) and the Doctor Start-up Funding of Ludong University (No. LA2016006).



\begin{thebibliography}{00}


\bibitem{Bellomo1216} N. Bellomo,  A. Belloquid,   Y. Tao, M. Winkler,  \textit{Toward a mathematical theory of
Keller--Segel models of pattern formation in biological tissues}, Math. Models Methods Appl. Sci., 25(9)(2015), 1663--1763.


\bibitem{Burger2710}M. Burger,  M. Di Francesco, Y. Dolak-Struss, \textit{The Keller--Segel model for chemotaxis with prevention of
overcrowding: linear vs nonlinear diffusion,} SIAM J. Math. Anal., 38(2007),  1288--1315.


\bibitem{Calvez710} V. Calvez,   J. A. Carrillo, \textit{Volume effects in the Keller--Segel model: Energy estimates
preventing blow-up,} J. Math. Pures Appl., 9(86)(2006),    155--175.



 \bibitem{Cie791} T. Cie\'{s}lak,  C. Stinner, \textit{Finite-time blowup and global-in-time unbounded solutions to a parabolic--parabolic quasilinear
Keller--Segel system in higher dimensions,} J. Diff. Eqns., 252(2012), 5832--5851.



\bibitem{Cie72}T. Cie\'{s}lak, M. Winkler, \textit{Finite-time blow-up in a quasilinear system of chemotaxis,}
Nonlinearity, 21(2008), 1057--1076.


%






\bibitem{Hillen79} T. Hillen,  K. J. Painter,  \textit{A user's guide to PDE models for chemotaxis,} J. Math. Biol., 58(2009), 183--217.








\bibitem{Horstmann2710}D. Horstmann,  \textit{From $1970$ until present: the Keller--Segel model in chemotaxis and its consequences,} I.
Jahresberichte der Deutschen Mathematiker-Vereinigung, 105(2003), 103--165.






\bibitem{Horstmann791} D. Horstmann, M. Winkler, \textit{Boundedness vs. blow-up in a chemotaxis system}, J. Diff. Eqns, 215(2005), 52--107.






\bibitem{Ishida}S. Ishida, K. Seki, T, Yokota, \textit{Boundedness in quasilinear Keller--Segel systems of parabolic--parabolic type on
non-convex bounded domains}, J. Diff. Eqns., 256(2014), 2993--3010.


%
%





\bibitem{Keengwwwwssddghjjkk1}Y. Ke, J. Zheng,   \textit{A note for global existence of a
two-dimensional chemotaxis-haptotaxis model
with remodeling of non-diffusible attractant},    Nonlinearity, 00(2018), 1--19.

 \bibitem{Keller79} E. Keller, L. Segel, \textit{Initiation of slime mold aggregation viewed as an instability, }  J. Theor. Biol., 26(1970), 399--415.








\bibitem{Laure102} P. Lauren\c{c}ot, N. Mizoguchi,  \textit{Finite time blowup for the parabolic-parabolic Keller-Segel system with critical diffusion}, Ann. Inst. H. Poincar\'{e} Anal. Non Lin\'{e}ire 34 (2017) 197--220.


%
%




%
\bibitem{Lianu1} G. Li\c{t}canu, C. Morales-Rodrigo, \textit{Asymptotic behavior of global solutions to a model of
cell invasion}, Math. Models Methods Appl. Sci., 20(2010),  1721--1758.


\bibitem{Li445666} Y. Li, J. Lankeit, \textit{Boundedness in a chemotaxis-haptotaxis model with nonlinear diffusion},  Nonlinearity, 29(5)(2016),
1564--1595.


%

\bibitem{Marciniak} A. Marciniak-Czochra, M. Ptashnyk,  \textit{Boundedness of solutions of a haptotaxis model},
Math. Models Methods Appl. Sci., 20(2010),  449--476.















 \bibitem{Osaki710} K. Osaki, T. Tsujikawa, A. Yagi,   M. Mimura, \textit{Exponential attractor for a chemotaxisgrowth
system of equations}, Nonlinear Anal. TMA., 51(2002),  119--144.







%



%











%


\bibitem{Tao794} Y. Tao, M. Winkler,  \textit{Boundedness in a quasilinear parabolic--parabolic Keller--Segel system with subcritical sensitivity}, J.
Diff. Eqns., 252(2012), 692--715.




%




%







\bibitem{Tello710} J. I. Tello,   M. Winkler, \textit{A chemotaxis system with logistic source}, Comm. Partial
Diff. Eqns., 32(2007),    849--877.





%

\bibitem{Walker} C. Walker, G. F. Webb, \textit{Global existence of classical solutions for a haptotaxis model},
SIAM J. Math. Anal., 38(2007),  1694--1713.



     \bibitem{Wang79}  L.  Wang,  Y.  Li, C.  Mu,  \textit{Boundedness in a parabolic--parabolic quasilinear chemotaxis system with logistic
source}, Discrete Contin. Dyn. Syst. Ser. A.,  34(2014), 789--802.









\bibitem{Wangscd331629} Y.  Wang,  \textit{Boundedness in the higher-dimensional chemotaxis-haptotaxis model with nonlinear diffusion},
J. Diff. Eqns.,
260(2)(2016),  1975--1989.






\bibitem{Wang72} Z.  Wang, M. Winkler, D. Wrzosek, \textit{Global regularity vs. infinite-times in gularity formation in a chemotaxis model
with volume-filling effect and degenerate diffusion}, SIAM J. Math. Anal., 44(2012), 3502--3525.



\bibitem{Winkler79} M. Winkler, \textit{ Does a volume-filling effect always prevent chemotactic collapse}, Math. Methods Appl. Sci., 33(2010), 12--24.




%


 \bibitem{Winkler37103}M. Winkler, \textit{Boundedness in the higher-dimensional parabolic--parabolic chemotaxis system with
logistic source}, Comm.  Partial Diff. Eqns., 35(2010), 1516--1537.




\bibitem{Winkler715} M. Winkler,  \textit{Blow-up in a higher-dimensional chemotaxis system despite logistic
growth restriction,}  J. Math. Anal. Appl., 384(2011), 261--272.




%
\bibitem{Winkler793} M. Winkler, \textit{Finite-time blow-up in the higher-dimensional parabolic--parabolic Keller--Segel system}, J. Math. Pures
Appl., 100(2013),  748--767.


\bibitem{Winkler79312} M. Winkler, \textit{Global asymptotic stability of constant equilibriain a fully parabolic chemotaxis system with strong logistic dampening},
 J. Diff. Eqns., 257(2014), 1056--1077.











%

%

%

\bibitem{Winklersddd715} M. Winkler,  \textit{A critical blow-up exponent in a chemotaxis system with nonlinear signal production},
 Nonlinearity, 31(5)(2018),  2031--2056.


 \bibitem{Winklersdddhjjj715} M. Winkler,  \textit{Finite-time blow-up in low-dimensional Keller--Segel systems with logistic-type
superlinear degradation},
 Z. Angew. Math. Phys., (2018) 69:40

%

%
%
%

\bibitem{Winkler72} M. Winkler, K. C. Djie, \textit{Boundedness and finite-time
collapse in a  chemotaxis system with volume-filling effect}, Nonlinear Anal. TMA.,  72(2010),  1044--1064.





\bibitem{Xiangssdd55672gg}  T. Xiang, \textit{Boundedness and global existence in the higher-dimensional parabolic--parabolic
chemotaxis system with/without growth source}, J. Diff. Eqns.,
258(2015), 4275--4323.

\bibitem{Zhangffgd55672gg}  Q. Zhang, Y. Li, \textit{Boundedness in a quasilinear fully parabolic Keller-Segel system
with logistic source}, Z. Angew. Math. Phys., 66(2015), 2473--2484.

\bibitem{Zheng0} J. Zheng, \textit{Boundedness of solutions to a quasilinear parabolic--elliptic Keller--Segel system with logistic source},  J. Diff. Eqns.,
259(1)(2015),  120--140.

\bibitem{Zheng33312186} J. Zheng, \textit{Boundedness of solutions to a quasilinear parabolic--parabolic Keller--Segel system with logistic source},
J. Math. Anal. Appl.,  431(2)(2015),  867--888.


\bibitem{Zhengsdsd6} J. Zheng,   \textit{Boundedness in a three-dimensional chemotaxis--fluid system involving tensor-valued sensitivity with saturation},
J. Math. Anal. Appl., 442(1)(2016), 353--375.


\bibitem{Zhengsssddsseedssddxxss}  J. Zheng,
\textit{A note on boundedness of solutions to a  higher-dimensional  quasi--linear chemotaxis system with logistic source}, Zeitschriftf\"{u}r Angewandte Mathematik und Mechanik, (97)(4)(2017), 414--421.

      \bibitem{Zhenssdssdddfffgghjjkk1} J. Zheng, \textit{Boundedness of solution of a  higher-dimensional parabolic--ODE--parabolic chemotaxis--haptotaxis model with generalized logistic source}, Nonlinearity, 30(2017), 1987--2009.




\bibitem{Zhddengssdeeezseeddd0} J. Zheng, \textit{Boundedness of solutions to a quasilinear higher-dimensional chemotaxis--haptotaxis model with nonlinear diffusion},
Discrete and Continuous Dynamical Systems,  (37)(1)(2017), 627--643.







 \bibitem{Zhengssss6677788888ssdefr23} J. Zheng, \textit{Global weak solutions in a three-dimensional Keller-Segel-Navier-Stokes system with nonlinear diffusion}, J. Diff. Eqns., 263(2017), 2606--2629.







\bibitem{Zhengsssddswwerrrseedssddxxss}  J. Zheng, \textit{Boundedness and global asymptotic stability of constant equilibria in a fully parabolic chemotaxis system with nonlinear a logistic source}, J. Math. Anal.  Appl., 450(2017), 104--1061.




 \bibitem{Zhengssdddssddddkkllssssssssdefr23} J. Zheng et. al., \textit{A new result for global existence and boundedness of
  solutions to a parabolic--parabolic Keller--Segel system with logistic source}, J. Math. Anal. Appl., 462(1)(2018), 1--25.



\bibitem{Zhengddfggghjjkk1}J. Zheng, Y. Ke,  \textit{Large time behavior of solutions to a fully parabolic chemotaxis--haptotaxis model in $N$ dimensions},
J. Diff. Eqns., 10.1016/j.jde.2018.08.018.


\bibitem{Zhengssdddwwwwssddghjjkk1}J. Zheng,   \textit{A new result for global solvability and boundedness in the $N$-dimensional quasilinear chemotaxis-haptotaxis model},  Preprint.

    \bibitem{222Zhengssdddwwwwssddghjjkk1}J. Zheng,   \textit{A new result for a quasilinear parabolic-elliptic chemotaxis system with logistic source},  Preprint.



\bibitem{Zhengssdddwssdddwwwssddghjjkk1} J. Zheng,   \textit{A new result for global solvability and boundedness in a quasilinear Keller-Segel-Stokes system (with  logistic source)},  Preprint.




\bibitem{Zhengsssddssddsseedssddxxss}  J. Zheng, Y. Wang,  \textit{Boundedness and decay behavior in a higher-dimensional quasilinear chemotaxis system with nonlinear logistic source}, Comp. Math. Appl., 72(10)(2016), 2604--2619.







%





%




%

\bibitem{Zheng333334556}	X. Zheng,  Y. Shang,  X. Peng, \textit{Orbital stability of
solitary waves ofthe coupled Klein-Gordon-Zakharov equations}, Mathematical
Methods in the Applied Sciences, (40)(2017), 2623--2633.


\bibitem{Zheng334556}	X. Zheng,  Y. Shang,  X. Peng, \textit{Orbital stability of periodic traveling wave sloutions to the generalized Zakharov equations}, Acta Mathematica Scientia, 37(4)(2017), 998-1018.












\end{thebibliography}
\end{document}